\newtheorem{thm}{Theorem}[section]
\newtheorem{cor}[thm]{Corollary}
\newtheorem{lem}[thm]{Lemma}
\theoremstyle{definition}
\newtheorem{defn}[thm]{Definition}
\theoremstyle{remark}
\numberwithin{equation}{section}
\newcommand{\R}{\mathbb{R}}
\newcommand{\N}{\mathbb{N}}
\newcommand{\Z}{\mathbb{Z}}
\newcommand{\T}{\mathbb{T}}
\newcommand{\C}{\mathbb{C}}
\newcommand{\Cm}{\mathbb{C}^*}
\newcommand{\Rm}{\mathbb{R}_+}
\newcommand{\Zc}{\mathcal{Z}}
\newcommand{\Nc}{\mathcal{N}}
\newcommand{\tw}{\circledast}
\newcommand{\td}{\circledcirc}
\newcommand{\Sm}{\mathcal{S}}
\newcommand{\Zb}{\Zc_{b}^2(G,\Cm)}
\newcommand{\Zbw}{\Zc_{bw}^2(G,\Cm)}
\newcommand{\F}{\mathcal{F}}
\newcommand{\G}{\mathcal{G}}
\newcommand{\Hc}{\mathcal{H}}
\newcommand{\w}{\text{w}^*}
\newcommand{\om}{\omega}
\newcommand{\Om}{\Omega}
\newcommand{\sg} {\sigma}
\newcommand{\supp}{\text{supp}}
\newcommand{\la}{\langle}
\newcommand{\ra}{\rangle}
\begin{document}

\title[Twisted Orlicz algebras, II]
{Twisted Orlicz algebras, II}

\author[Serap \"{O}ztop]{Serap \"{O}ztop}
\address{Department of Mathematics, Faculty of Science, Istanbul University, Istanbul, Turkey}
\email{oztops@istanbul.edu.tr}

\author{Ebrahim Samei}
\address{Department of Mathematics and Statistics, University of Saskatchewan, Saskatoon, Saskatchewan, S7N 5E6, Canada}
\email{samei@math.usask.ca}

\footnote{{\it Date}: \today.

2000 {\it Mathematics Subject Classification.} Primary 46E30,  43A15, 43A20; Secondary 20J06.

{\it Key words and phrases.} Orlicz spaces, Young functions, 2-cocycles and 2-coboundaries, locally compact groups, twisted convolution, weights, groups with polynomial growth, dual Banach algebras, Arens regularity, (Connes) amenability.

The second named author was supported by NSERC Grant no. 409364-2015 and 2221-Fellowship Program For Visiting Scientists And Scientists On Sabbatical Leave from Tubitak, Turkey.}

%\date{May 19, 2008.  Revised on August 20, 2008}

%\subjclass{Primary 46E30,  43A15, 43A20; Secondary 20J06}

%\keywords{Orlicz spaces, Young functions, 2-cocycles and 2-coboundaries, locally compact groups, twisted convolution, weights, groups with polynomial growth, symmetry}

%\date{May 19, 2008.  Revised on August 20, 2008}

%\subjclass{Primary 43A30, 43A45; Secondary 22E15, 43A80.}

%\keywords{locally compact groups, Lie groups, Fourier algebras,
%smooth synthesis, weak synthesis}

\maketitle

\begin{abstract}
Let G be a locally compact group, let $\Om:G\times G\to \Cm$ be a 2-cocycle, and let ($\Phi$,$\Psi$) be a complementary pair of strictly increasing continuous Young functions. We continue our investigation in \cite{OS1} of the algebraic properties of the Orlicz space $L^\Phi(G)$ with respect to the twisted convolution $\tw$ coming from $\Om$. We show that the twisted Orlicz algebra $(L^\Phi(G),\tw)$ posses a bounded approximate identity if and only if it is unital if and only if $G$ is discrete. On the other hand, under suitable condition on $\Om$, $(L^\Phi(G),\tw)$ becomes an Arens regular, dual Banach algebra. We also look into certain cohomological properties of $(L^\Phi(G),\tw)$, namely amenability and Connes-amenability, and show that they rarely happen. We apply our methods to compactly generated group of polynomial growth and demonstrate that our results could be applied to variety of cases.
\end{abstract}

Orlicz spaces are important type of Banach function spaces that are considered in mathematical analysis. Beside the fact that they generalize $L^p$-spaces, they appear
naturally in computation such as the well-known Zygmund space $L \log^+ L$ which is a Banach space related to Hardy-Littlewood maximal functions. They could also contain certain Sobolev spaces as subspaces. Linear properties of Orlicz spaces have been studied thoroughly (see \cite{rao} for example). However their algebraic properties have been left almost untouched possibility due to the fact that they may fail to be an algebra under a natural multiplication. To be more precise, if one consider an Orlicz space $L^\Phi(G)$ associated to the Young function $\Phi$ over
a locally compact group $G$, then one could ask whether the convolution of compactly supported continuous functions on $G$ can be extend to $L^\Phi(G)$.
However, in most cases, this happens if and only if $L^\Phi(G)$ is a subspace of
$L^1(G)$, where the latter condition is rather restrictive; it forces either $G$ to be compact or the Young function $\Phi$ to have a sublinear growth (see \cite{AM}, \cite{HKM}, \cite{S} for details). For example, if $G$ is not compact, then $L^p(G)$ ($1<p<\infty$) can never be an algebra under the convolution.

To compensate for the failure of the having a convolutive structure on $L^p(G)$, we could look at weighted $L^p(G)$ spaces. A {\it weight} $\om$ on $G$ is a locally bounded measurable function from $G$ into the positive reals. For such a weight, one can extend the construction of $L^p(G)$ to the ``weighted" $L_\om^p(G)$, i.e.
$$L_\om^p(G):=\{f: f\om \in L^p(G)\ \text {and}\ \|f\|_\om=\|f\om\|_p \}.$$
These spaces have various properties and numerous applications in harmonic analysis.
For instant, by applying the Fourier transform, we know that Sobolev spaces $W^{k,2}(\T)$ are nothing but certain weighted $l_\om^2(\Z)$ spaces.
 %which appear very naturally, one reason would be to make differentiation a bounded operator.
A particular aspect of the behavior of weighted $L^p_\om$ spaces over locally compact groups is that they could form an algebra with respect to the convolution! More precisely, when $p=1$ and $\om$ is submultiplicative, it follows routinely that $L_\om^1(G)$ is a Banach algebra.
Even though, this may not hold in general if $p>1$, there are sufficient conditions under which $L_\om^p(G)$ is a Banach algebra with respect to the convolution.
This was first shown by J. Wermer for $G=\R$ in \cite{JW} and
Yu. N. Kuznetsova later extended it to general locally compact groups. She has also studied some important properties of $L_\om^p(G)$ as a Banach algebra such as the existence of an approximate identity and, for an abelian $G$, a description of their the maximal ideal space (see, \cite{K1}, \cite{K2}, and the references therein). Moreover, in \cite{KM} and together with C. Molitor-Braun, they studied other properties such as symmetry, existence of functional calculus and having the Wiener property.

As Orlicz spaces are generalization of $L^p$ spaces, one could also consider weighted Orlicz spaces and study their properties. Very recently, A. Osan\c{c}l{\i}ol  and S. \"{O}ztop have looked at weighted Orlicz spaces over locally compact groups as Banach algebras with respect to convolution (\cite{OO}). They found sufficient conditions for which the corresponding space becomes an algebra and studied their properties. Their work, in part, extend some of the results of Kuznetsova to a wider class of algebras.

In \cite{OS1}, the authors initiated a general approach to study possible algebraic structure on Orlicz spaces related to the convolution multiplication. We considered the twisted convolution $\tw$ coming from a 2-cocycle $\Om$ with values in $\C^*$, the multiplicative group of complex numbers. Sufficient conditions on $\Om$ were found ensuring that the twisted convolution coming from $\Om$ turns the Orlicz space to a
Banach algebra and, for the cases where $|\Om|$ is a 2-coboundary determined by a symmetric submultiplicative weight $\om$, even a Banach $*$-algebra \cite[Theorems 3.3 and 4.5]{OS1}. We called the algebras we obtained the {\it twisted Orlicz algebras}. We applied our method and showed that there are abundant families of symmetric Banach $*$-algebras in the form of twisted Orlicz algebras on compactly generated groups with polynomial growth \cite[Theorems 5.2 and 5.8]{OS1}. Our approach in \cite{OS1} not only embed everything we discuss in the preceding paragraphs but also allows us to systematically and simultaneously study twisted convolution coming from 2-cocycles with values in $\T$ as well as the weighted spaces coming from suitable submultiplicative weights.

In the present manuscript, which is a sequel to \cite{OS1}, we continue our investigation of twisted Orlicz algebras on locally compact groups. We restrict ourselves to Orlicz spaces coming from ($\Phi$,$\Psi$), a complementary pair of strictly increasing continuous Young functions. For a locally compact group $G$ and the 2-cocycle $\Om$, we show that if $(L^\Phi(G),\tw)$ is algebra, then it has a bounded approximate identity if and only if it is unital if and only if $G$ is discrete. On the other hand,
when $G$ is unimodular and $\Om$ satisfies a suitable decomposition criterion, we show that $(L^\Phi(G),\tw)$ is an Arens regular dual Banach algebra. In particular, we show that there exist rich families of Arens regular dual Banach algebra in the form of twisted Orlicz algebras on compactly generated groups with polynomial growth. In these context, we could say that twisted Orlicz algebras behave very similar to weighted group algebras on {\it discrete} groups, an interesting relation that worth further investigations.

We also look at the cohomological properties of twisted Orlicz algebras. When $G$ is non-discrete, we show that $(L^\Phi(G),\tw)$ fails to be amenable or even Connes-amenable in the cases where it is a dual Banach algebra. On infinite discrete groups, we obtain similar results but only when $\Om$ is a 2-coboundary determined by a weight $\om$, i.e. we consider weighted Orlicz algebras $(L^\Phi(G),\tw)\cong (L^\Phi_\om(G),*)$. The general case on discrete groups remains open as our method can not be generalized when $\Om$ has a non-trivial twist.

We finish by pointing out that throughout this paper, we concern ourselves with the
theory ``bounded multiplications" for Banach algebras and Banach modules, as opposed to ``contractive multiplications". Also weights for us are ``weakly submultiplicative" as opposed to ``submultiplicative".

\section{Preliminaries}

In this section, we give some definitions and state some technical results that will be crucial in the rest of this paper. In this paper, $G$ denotes a locally compact group with a fixed left Haar measure $ds$.

\subsection{Orlicz Spaces}

In this section, we recall some facts concerning Young functions and Orlicz spaces. Our main reference is \cite{rao}.

A nonzero function $\Phi:[0,\infty) \to[0,\infty]$ is called a Young
function if $\Phi$ is convex, $\Phi(0)=0$, and $\lim_{x\to \infty} \Phi(x)=\infty$. For a Young function $\Phi$, the complementary function $\Psi$ of
$\Phi$ is given by
\begin{align}\label{Eq:Young function-complementary}
\Psi(y)=\sup\{xy-\Phi(x):x\ge0\}\quad(y\geq 0).
\end{align}
It is easy to check that $\Psi$ is also a Young function. Also, if $\Psi$ is the complementary function of $\Phi$, then $\Phi$ is
the complementary of $\Psi$ and $(\Phi,\Psi)$ is called a
complementary pair. We have the Young inequality
\begin{align}\label{Eq:Young inequality}
xy\le\Phi(x)+\Psi(y)\quad(x,y\ge0)
\end{align}
for complementary functions $\Phi$ and $\Psi$. By our
definition, a Young function can have the value $\infty$ at a point,
and hence be discontinuous at such a point. However, we always consider the pair of complementary Young
functions $(\Phi,\Psi)$ with both $\Phi$ and $\Psi$ being continuous and strictly increasing. In particular, they attain positive values on $(0,\infty)$.

Now suppose that $G$ is a locally compact group with a fixed Haar measure $ds$ and $(\Phi,\Psi)$ is a complementary pair of Young functions. We define
\begin{align}\label{Eq:Orlicz defn-0}
\mathcal{L}^\Phi(G)=\left\{f:G\to\C:f \ \text{is measurable and}\  \int_G\Phi(|f(s)|)\,ds <\infty
\right\}.
\end{align}
Since $\mathcal{L}^\Phi(G)$ is not always a linear space, we define the Orlicz space $L^\Phi(G)$ to be
\begin{align}\label{Eq:Orlicz defn}
L^\Phi(G)=\left\{f:G\to\C:\int_G\Phi(\alpha|f(s)|)\,ds <\infty
\mbox{ for some }\alpha>0\right\},
\end{align}
where $f$ indicates a member in the equivalence classes of measurable functions with respect to the Haar measure $ds$. When $G$ is discrete, we simply use the standard terminology and write $l^\Phi(G)$ instead of $L^\Phi(G)$. Then the Orlicz space is a Banach space under the (Orlicz) norm $\|\cdot\|_\Phi$
defined for $f\in L^\Phi(G)$ by
\begin{align}\label{Eq:Orlicz norm}
\|f\|_\Phi=\sup\left\{\int_G|f(s)v(s)|\,ds: \int_G\Psi(|v(s)|)\,ds \le1\right\},
\end{align}
where $\Psi$ is the complementary function to $\Phi$. One can also
define the (Luxemburg) norm $N_\Phi(\cdot)$ on $L^\Phi(G)$ by
\begin{align}\label{Eq:Orlicz Luxemburg defn}
N_\Phi(f)=\inf\left\{k>0:\int_G\Phi\left(\frac{|f(s)|}{k}\right)
\,ds \le1\right\}.
\end{align}
It is known that these two norms are equivalent; that is,
\begin{align}\label{Eq:Orlicz norm-Luxemburg relation}
N_\Phi(\cdot)\le \|\cdot\|_\Phi\le2 N_\Phi(\cdot)
\end{align}
and
\begin{align}\label{Eq:Orlicz norm-defn relation}
N_\Phi(f)\le1 \ \ \text{if and only if}\ \ \int_G\Phi(|f(s)|)\,ds \le1.
\end{align}
Let $\Sm^\Phi(G)$ be the closure of the linear
space of all step functions in $L^\Phi(G)$. Then $\Sm^\Phi(G)$ is a Banach space
and contains $C_c(G)$, the space of all continuous functions on $G$ with compact support, as a dense subspace \cite[Proposition 3.4.3]{rao}. Moreover, $\Sm^\Phi(G)^*$, the dual of  $\Sm^\Phi(G)$, can be identified with $L^\Psi(G)$ in a natural way \cite[Theorem 4.1.6]{rao}. Another useful characterization of $\Sm^\Phi(G)$ is that
$f\in \Sm^\Phi(G)$ if and only if for every $\alpha>0$, $\alpha f\in \mathcal{L}^\Phi(G)$ \cite[Definition 3.4.2 and Proposition 3.4.3]{rao}.

A Young function $\Phi$ satisfies the $\Delta_2$-condition %(globally)
if there
exist a constant $K>0$
%and $x_0\geq 0$ $(x_0=0)$
such that
\begin{align}\label{Eq:Delta 2 condition}
\Phi(2x)\le K\Phi(x) \ \ \text{for all}\ \ x\ge 0.
\end{align}
In this case we write
$\Phi\in\Delta_2$.
If $\Phi\in\Delta_2$, then it follows that $L^\Phi(G)=\Sm^\Phi(G)$ so that  $L^\Phi(G)^*=L^\Psi(G)$ \cite[Corollary 3.4.5]{rao}. If, in addition, $\Psi\in\Delta_2$, then the Orlicz space $L^\Phi(G)$ is a reflexive Banach space.

As in \cite[Page 20]{rao}, we say that two Young functions $\Phi_1$ and $\Phi_2$ are {\it strongly equivalent} and write $\Phi_1 \approx \Phi_2$ if there exists $0<a\leq b<\infty$ such that $$\Phi_1(ax)\leq \Phi_2(x)\leq \Phi_1(bx) \ \ \ (x\geq 0).$$
It is clear from the definition of the Orlicz space \eqref{Eq:Orlicz defn} that the strongly equivalent Young functions generate the same Orlicz space allowing us to consider different strongly equivalent Young functions to represent the same Orlicz space.

We will frequently use the (generalized) H\"{o}lder's inequality for Orlicz spaces
\cite[Remark 3.3.1]{rao}.
More precisely, for any complementary pair of Young functions $(\Phi,\Psi)$
and any $f\in L^\Phi(G)$ and $g\in L^\Psi(G)$, we have
\begin{align}\label{Eq:Holder inequality}
\|fg\|_1:=\int_G |f(s)g(s)|ds \leq \min\{N_\Phi(f)\|g\|_\Psi , \|f\|_\Phi N_\Psi(g)\}.
\end{align}
This, in particular, implies that $fg\in L^1(G)$.

In general, there is an straightforward method to construct various complementary pairs of strictly increasing continuous Young functions as describes in
\cite[Theorem 1.3.3]{rao}. Suppose that $\varphi: [0,\infty)\to [0,\infty)$ is a continuous strictly increasing function with $\varphi(0)=0$ and
$\lim_{x\to \infty} \varphi(x)=\infty.$
Then $$\Phi(x)=\int_0^x \varphi(y)dy$$ is a continuous strictly increasing Young function and
$$\Psi(y)=\int_0^y \varphi^{-1}(x)dx$$
is the complementary Young function of $\Phi$ which is also continuous and strictly increasing.
Here $\varphi^{-1}(x)$ is the inverse function of $\varphi$. Here are a few families of examples satisfying the above construction (see \cite[Proposition 2.11]{ML} and \cite[Page 15]{rao} for more details):

$(1)$ For $1< p,q<\infty$ with $\frac{1}{p}+\frac{1}{q}=1$, if $\Phi(x)=\frac{x^p}{p}$, then $\Psi(y)=\frac{y^q}{q}$. In this case,
the space $L^{\Phi}(G)$ becomes the Lebesgue space $L^p(G)$ and the norm $\|\cdot\|_{\Phi}$ is equivalent to the
classical norm $\|\cdot\|_{p}$.

(2) If $\Phi(x)=x\ln (1+x)$, then $\Psi(x) \approx \cosh x-1$.

(3) If $\Phi(x)=\cosh x-1$, then $\Psi(x)\approx x\ln (1+x)$.

(4) If $\Phi(x)=e^x-x-1$, then $\Psi(x)=(1+x)\ln(1+x)-x$.

%(5) If $\Phi(x)=(1+x)\ln(1+x)-x$, then $\Psi(x)=e^x-x-1$.

\subsection{2-Cocycles and 2-Cobounaries}

Throughout this article, we use the notation $\Cm$ to denote the multiplicative group of complex numbers, i.e. $\Cm=\C \setminus \{0\}$,
$\Rm$ to be multiplicative group of positive real numbers, and $\T$ to be the unit circle in $\C$.

\begin{defn}\label{D:2-cocycle defn}
Let $G$ and $H$ be locally compact groups such that $H$ is abelian. A {\it (normalized) 2-cocycle on $G$ with values in $H$} is a Borel
measurable map $\Om: G\times G \to H$ such that
\begin{align}\label{Eq:2-cocycle relation}
\Om(r,s)\Om(rs,t)=\Om(s,t)\Om(r,st) \ \ \  (r,s,t \in G)
\end{align}
and
\begin{align}\label{Eq:2-cocycle relation normalization}
\Om(r,e_G)=\Om(e_G,r)=e_H \ \ \ (r\in G).
\end{align}
The set of all normalized 2-cocycles will be denoted by $\Zc^2(G,H)$.
\end{defn}
If $\om: G \to H$ is measurable with $\om(e_G)=e_H$, then it is easy to see that the mapping
$$(s,t)\mapsto \om(st)\om(s)^{-1}\om(t)^{-1}$$
satisfies \eqref{Eq:2-cocycle relation} and \eqref{Eq:2-cocycle relation normalization}. Hence it is a
2-cocycle; such maps are called 2-coboundary. The set of 2-coboundaries will be denoted by $\Nc^2(G,H)$.
It is easy to check that $\Zc^2(G,H)$ is an abelian group under the product
$$\Om_1 \Om_2 (s,t)=\Om_1(s,t)\Om_2(s,t) \ \ (s,t\in G),$$
and $\Nc^2(G,H)$ is a (normal) subgroup of $\Zc^2(G,H)$. This, in particular, implies that
$$\Hc^2(G,H):=\Zc^2(G,H)/\Nc^2(G,H)$$ turns into a group; This is called the 2$^{nd}$ group cohomology of $G$ into $H$
with the trivial actions (i.e. $s\cdot \alpha=\alpha\cdot s=\alpha$ for all $s\in G$ and $\alpha \in H$).

We are mainly interested in the cases when $H$ is $\Cm$, $\Rm$ or $\T$. One essential observation is that
we can view $\Cm=\Rm \T$ as a (pointwise) direct product of groups. Hence, for any 2-cocycle
$\Om$ on $G$ with values in $\Cm$ and $s,t\in G$, we can (uniquely) write
$\Om(s,t)=|\Om(s,t)| e^{i\theta}$ for some $0\leq \theta <2\pi$. Therefore, if we put
\begin{align}
|\Om|(s,t):=|\Om(s,t)| \ \ \text{and}\ \ \Om_\T(s,t):=e^{i\theta},
\end{align}
then $\Om=|\Om|\Om_\T$ (in a unique way) and the mappings $|\Om|$ and $\Om_\T$ are 2-cocycles
on $G$ with values in $\Rm$ and $\T$, respectively.

\subsection{Groups with polynomial growth}\label{S:Groups poly. growth}

%\subsection{General theory}\label{S:Groups poly. growth-definition}
Let $G$ be a compactly generated group with a fixed compact symmetric generating neighborhood $U$ of the identity of the group $G$.
$G$ is said to have {\it polynomial growth} if there exist $C>0$ and $d\in \N$ such that for every $n\in \N$
	$$\lambda(U^n)\leq Cn^d \ \ \ (n\in \N).$$
Here $\lambda(S)$ is the Haar measure of any measurable $S\subseteq G$ and
	$$U^n=\{u_1\cdots u_n : u_i\in U, i=1,\ldots, n \}.$$
The smallest such $d$ is called {\bf the order of growth} of $G$ and it is denoted by $d(G)$.
It can be shown that the order of growth of $G$ does not depend on the symmetric generating set $U$, i.e. it is a universal constant for $G$.
Also, by \cite[Lemma 2.3]{FGL}, the compact symmetric neighborhood $U$ can be chosen so that it has a strict polynomial growth, i.e.
%there is a polynomial $F$ with $\text{deg}\, F=d(G)$ and $0<\mu \leq 1$ such that
%\begin{align}\label{Eq:stric poly growth}
%\mu F(n) \leq \lambda(U^n)\leq F(n) \ \ \ (n\in \N).
%\end{align}
there are positive numbers $C_1$ and $C_2$ such that
\begin{align}\label{Eq:stric poly growth}
C_1n^d \leq \lambda(U^n)\leq C_2n^d \ \ \ (n\in \N).
\end{align}
%Also, by \cite[Lemma 2.3]{FGL}, the compact symmetric neighborhood $U$ can be chosen so that it has a strict polynomial growth, i.e.
%there is a polynomial $F$ with $\text{deg}\, F=d(G)$ and $0<\mu \leq 1$ such that
%\begin{align}\label{Eq:stric poly growth}
%\mu F(n) \leq \lambda(U^n)\leq F(n) \ \ \ (n\in \N).
%\end{align}
%there are positive numbers $C_1$ and $C_2$ such that
%$$C_1n^d \leq \lambda(U^n)\leq C_2n^d \ \ \ (n\in \N).$$

It is immediate that compact groups are of polynomial growth. More generally, every $G$ with the property that the conjugacy class of
every element in $G$ is relatively compact has polynomial growth \cite[Theorem 12.5.17]{Pal}. Also every (compactly generated) nilpotent
group (hence an abelian group) has polynomial growth \cite[Theorem 12.5.17]{Pal}.

Using the generating set $U$ of $G$ we can define a {\it length function} $\tau_U : G \to [0, \infty)$ by
\begin{align}\label{Eq:length function}
\tau_U(x)=\inf \{n\in \N : x\in U^n \} \ \ \text{for} \ \ x \neq e, \ \ \tau_F(e)=0.
\end{align}
When there is no fear of ambiguity, we write $\tau$ instead of $\tau_U$.
It is straightforward to verify that $\tau$ is a symmetric subadditive function on $G$, i.e.
\begin{align}\label{Eq:lenght func-trai equality}
\tau(xy)\leq \tau(x)+\tau(y) \ \ \text{and} \ \ \tau(x)=\tau(x^{-1})\ \ (x,y\in G).
\end{align}
%Note that since $U$ is symmetric, for every $x\in G$, $\tau(x)=\tau(x^{-1})$.
%If we combine this fact
%with (\ref{Eq:lenght func-trai equality}), then a straightforward calculation
%shows that
%\begin{align}\label{Eq:lenght func-trai equality-double side}
%|\tau(x)-\tau(y)|\leq \tau(xy)\leq \tau(x)+\tau(y) \ \ \ \ (x,y\in G).
%\end{align}
We can use $\tau$ to define various (nontrivial) weights on $G$.
For instance, for every $0< \alpha < 1$, $\beta > 0$, $\gamma >0$, and $C>0$,
we can define the {\it polynomial weight} $\om_\beta$ on $G$ of order $\beta$ by
\begin{align}\label{Eq:poly weight-defn}
\om_\beta(s)=(1+\tau(s))^\beta  \ \ \ \ (s\in G),
\end{align}
and the {\it subexponential weights} $\sg_{\alpha, C}$ and $\rho_{\beta,C}$ on $G$ by
\begin{align}\label{Eq:Expo weight-defn}
\sg_{\alpha,C}(s)=e^{C\tau(x)^\alpha} \ \ \ \ (s\in G)
\end{align}
and
\begin{align}\label{Eq:Expo weight II-defn}
\rho_{\gamma,C}(s)=e^\frac{C\tau(s)}{(\ln (1+\tau(s)))^\gamma} \ \ \ \ (s\in G).
\end{align}

%\section{Twisted group algebra}\label{S:Twisted group alg}

\section{Twisted Orlicz algebras}\label{S:Twisted Orlicz alg}

In this section, we present and summarize what we need from the theory of twisted Orlicz algebras. These are taken from \cite{OS1}.

%Throughout this section, $G$ is a locally compact group with a fixed left Haar measure $ds$.

\begin{defn}\label{D:bound 2 cocycles}
We denote $\Zb$ to be the group of {\bf bounded 2-cocycles on $G$ with values in $\Cm$}
which consists of all element $\Om\in \Zc^2(G,\Cm)$ satisfying the following conditions:\\
$(i)$ $\Om \in L^\infty(G\times G)$;\\
$(ii)$ $\Om_\T$ is continuous.\\
We also define $\Zbw$ to be the subgroup of $\Zb$ consisting of elements $\Om \in \Zb$ for which
$$|\Om|(s,t)=\frac{\om(st)}{\om(s)\om(t)} \ \ \ (s,t\in G),$$ where $\om : G \to \R_+$ is a locally integrable measurable function
with $\om(e)=1$ and $1/\om\in L^\infty(G)$. In this case, we called $\om$ a {\bf weight} on $G$ and say that $|\Om|$ is the {\bf 2-coboundary determined by} $\om$, or alternatively, $\om$ is the {\bf weight associated to} $|\Om|$.
\end{defn}

Now suppose that $\Om\in \Zb$ and $f$ and $g$ are measurable functions on $G$.
%If there is a $\sigma$-finite measurable subset $E$ of $G$ such that
%$f=g=0$ on $G\setminus E$, then
We define the {\bf twisted convolution of $f$ and $g$ under $\Om$} to be
\begin{align}\label{Eq:twisted convolution}
f\tw g (t)=\int_G f(s)g(s^{-1}t)\Om(s,s^{-1}t)ds  \ \ \ (t\in G)
\end{align}
It follows routinely that for every $f,g\in L^1(G)$,
$f\tw g\in L^1(G)$ with $\|f\tw g\|_1\leq \|\Om\|_\infty \|f\|_1\|g\|_1$.
We conclude that $(L^1(G),\tw)$ becomes a Banach algebra; it is called
the {\bf twisted group algebra} (see \cite[Section 2]{OS1} for more details).

\begin{defn}
Let $G$ be a locally compact group, let $\Om\in \Zb$, and let $\tw$ be the twisted convolution coming from $\Om$. We say
that $(L^\Phi(G),\tw)$ is a {\bf twisted Orlicz algebra} if $(L^\Phi(G),\tw, \|\cdot\|_\Phi)$ is a Banach algebra, i.e. there is $C>0$ such that
for every $f,g\in L^\Phi(G)$, $f\tw g\in L^\Phi(G)$ with
$$ \|f\tw g\|_\Phi \leq C\|f\|_\Phi \|g\|_\Phi.$$
\end{defn}

In \cite{OS1}, sufficient conditions on $\Om$ were found under which the twisted convolution \eqref{Eq:twisted convolution} turns an Orlicz space to a twisted Orlicz algebra. We formulate it below which is basically \cite[Lemma 3.2 and Theorem 3.3]{OS1}.

\begin{thm}\label{T:twisted Orlicz alg}
Let $G$ be a locally compact group, and let $\Om\in \Zbw$. Then:\\
$(i)$ $L^\Phi(G)$ is a Banach $L^1(G)$-bimodule with respect to the twisted convolution \eqref{Eq:twisted convolution} having $\Sm^\Phi(G)$ as an essential Banach $L^1(G)$-submodule;\\
$(ii)$ Suppose that there exits non-negative measurable functions $u$ and $v$
in $L^\Psi(G)$ such that
\begin{align}\label{Eq:2-cocycle bdd sum}
|\Om(s,t)|\leq u(s)+v(t) \ \ \ (s,t\in G).
\end{align}
Then
for every $f,g\in L^\Phi(G)$, the twisted convolution \eqref{Eq:twisted convolution}
is well-defined on $L^\Phi(G)$
% and we have
%\begin{align}\label{Eq:twisted convolution-diff norm relation}
%\|f\tw g\|_\Phi\leq \|fu\|_1 \|g\|_\Phi+\|f\|_\Phi\|gv\|_1.
%\end{align}
%In particular,
so that $(L^\phi(G),\tw)$ becomes a twisted Orlicz algebra having $\Sm^\Phi(G)$ as a closed subalgebra.
\end{thm}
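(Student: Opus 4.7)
The plan is to prove both parts by combining duality between the Orlicz norm $\|\cdot\|_\Phi$ (definition \eqref{Eq:Orlicz norm}) and the Luxemburg norm $N_\Psi$ on $L^\Psi(G)$ with the generalized Hölder inequality \eqref{Eq:Holder inequality} and the key 2-coboundary identity. Specifically, for $\Om\in\Zbw$ one has $|\Om(s,s^{-1}t)| = \om(t)/(\om(s)\om(s^{-1}t))$, and since $\Om\in L^\infty(G\times G)$, the associated weight $\om$ is weakly submultiplicative: $\om(st)\le \|\Om\|_\infty \om(s)\om(t)$.

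For part $(i)$, I take $f\in L^1(G)$, $g\in L^\Phi(G)$, and a test function $v\in L^\Psi(G)$ with $\int_G\Psi(|v|)\le 1$ (so $N_\Psi(v)\le 1$ by \eqref{Eq:Orlicz norm-defn relation}). Using Fubini and the identity above,
\[
\int_G |(f\tw g)(t)|\,|v(t)|\,dt \le \int_G \frac{|f(s)|}{\om(s)}\left[\int_G |g(s^{-1}t)|\frac{\om(t)}{\om(s^{-1}t)}|v(t)|\,dt\right]ds.
\]
The inner integral, after the substitution $u=s^{-1}t$ (left-invariance of Haar measure) and the weak submultiplicativity bound, is at most $\|\Om\|_\infty\om(s)\int_G |g(u)||v(su)|\,du$, which Hölder further bounds by $\|\Om\|_\infty\om(s)\|g\|_\Phi N_\Psi(v)$ (translation-invariance of $N_\Psi$ with respect to left Haar measure). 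Taking the supremum over $v$ yields $\|f\tw g\|_\Phi\le\|\Om\|_\infty\|f\|_1\|g\|_\Phi$; the right-module estimate is analogous. For essentiality of $\Sm^\Phi(G)$, I would take a bounded approximate identity $(e_\alpha)$ of $L^1(G)$ with supports shrinking to $e_G$, and use continuity of $\Om_\T$ together with local boundedness of $\om$ to show $e_\alpha\tw f\to f$ in $\|\cdot\|_\Phi$, first for $f\in C_c(G)$ and then by density for $f\in\Sm^\Phi(G)$.

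For part $(ii)$, the bound $|\Om(s,t)|\le u(s)+v(t)$ decouples the twist: for $f,g\in L^\Phi(G)$,
\[
|(f\tw g)(t)| \le \bigl[(|f|u)\ast |g|\bigr](t) + \bigl[|f|\ast(|g|v)\bigr](t),
\]
where $\ast$ is ordinary (untwisted) convolution. Hölder \eqref{Eq:Holder inequality} gives $|f|u,\,|g|v\in L^1(G)$ with $\||f|u\|_1\le N_\Phi(f)\|u\|_\Psi$ and similarly for $|g|v$. Applying the untwisted $L^1(G)\ast L^\Phi(G)\subseteq L^\Phi(G)$ bound (a special case of part $(i)$ with $\om\equiv 1$) produces
\[
\|f\tw g\|_\Phi\le N_\Phi(f)\|u\|_\Psi\|g\|_\Phi + \|f\|_\Phi\|v\|_\Psi N_\Phi(g) \le C\|f\|_\Phi\|g\|_\Phi
\]
after invoking the norm equivalence \eqref{Eq:Orlicz norm-Luxemburg relation}. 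For the $\Sm^\Phi(G)$ subalgebra claim, I would exploit the characterization $f\in\Sm^\Phi(G)\iff \alpha f\in\mathcal{L}^\Phi(G)$ for every $\alpha>0$: rescaling and writing $\alpha(f\tw g)=(\sqrt{\alpha}f)\tw(\sqrt{\alpha}g)$ together with the above bound shows $\alpha(f\tw g)\in\mathcal{L}^\Phi(G)$ whenever $f,g\in\Sm^\Phi(G)$.

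The main obstacle is the measure-theoretic bookkeeping: justifying Fubini on $G\times G$ when only one factor is known to lie in $L^\Phi$, handling the case of non-unimodular $G$ so that one invokes left-invariance rather than two-sided invariance at the correct step, and carefully tracking constants between the two Orlicz norms $\|\cdot\|_\Phi$ and $N_\Phi(\cdot)$. Once the reduction to the weight $\om$ and the dual variable $v$ is organized properly, the estimates in both parts are essentially forced by Hölder's inequality and left-invariance.
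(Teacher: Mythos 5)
The paper does not actually prove this statement; it imports it verbatim from \cite[Lemma 3.2 and Theorem 3.3]{OS1}, so your argument can only be judged on its own terms. Your overall strategy --- reducing $|\Om|$ to the weight $\om$ via the coboundary identity and testing against $v\in L^\Psi(G)$ with $N_\Psi(v)\le 1$ for part (i), and splitting $|f\tw g|\le(|f|u)*|g|+|f|*(|g|v)$ and feeding both pieces back into the $L^1$-module structure for part (ii) --- is the natural one, and your left-module estimate $\|f\tw g\|_\Phi\le\|\Om\|_\infty\|f\|_1\|g\|_\Phi$ is correct: there $N_\Psi(v(s\,\cdot))=N_\Psi(v)$ by left invariance of the Haar measure.

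However, there are two genuine gaps. First, the right-module estimate is \emph{not} ``analogous''. For $g\tw f$ with $g\in L^\Phi(G)$ and $f\in L^1(G)$, the same substitution $u=s^{-1}t$ leaves you with the inner integral $\int_G|g(s)||v(su)|\,ds\le\|g\|_\Phi N_\Psi(v(\cdot\,u))$, i.e.\ a \emph{right} translate of the test function, and $\int_G\Psi(|v(su)|/k)\,ds=\Delta(u)^{-1}\int_G\Psi(|v(s)|/k)\,ds$, so $N_\Psi(v(\cdot\,u))$ is only controlled by $\max\{1,\Delta(u)^{-1}\}N_\Psi(v)$; the resulting bound involves $\int_G|f(u)|\max\{1,\Delta(u)^{-1}\}\,du$, which is not finite for all $f\in L^1(G)$ unless $G$ is unimodular. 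The same defect infects the term $|f|*(|g|v)$ in part (ii), which is exactly an instance of $L^\Phi(G)*L^1(G)$. This is not bookkeeping: either one restricts to unimodular $G$ (as the paper itself does from its dual-Banach-algebra section onward) or one must build a modular-function correction into the right action, and your write-up does neither. Second, your argument that $\Sm^\Phi(G)$ is a subalgebra is circular: the estimate $\|\alpha(f\tw g)\|_\Phi=\|(\sqrt{\alpha}f)\tw(\sqrt{\alpha}g)\|_\Phi<\infty$ shows only that $\alpha(f\tw g)\in L^\Phi(G)$, which holds for \emph{every} element of $L^\Phi(G)$ and every $\alpha>0$; it does not show $\int_G\Phi(\alpha|f\tw g|)\,ds<\infty$, i.e.\ membership in $\mathcal{L}^\Phi(G)$. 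The repair is to use your pointwise domination together with the facts that $\Sm^\Phi(G)$ is an $L^1(G)$-submodule and that it is solid ($|h|\le k$ with $k\in\Sm^\Phi(G)$ forces $h\in\Sm^\Phi(G)$, since $\Phi$ is increasing); alternatively, note that $C_c(G)\tw C_c(G)$ consists of bounded compactly supported functions, hence lies in $\Sm^\Phi(G)$, and conclude by density and the continuity of $\tw$.
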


One instance where our criterion can be successfully applied is when we consider 2-cocycles and Orlicz algebras on compactly generated groups of polynomial growth. In particular, we have the following which is \cite[Corollary 5.3 and Theorem 5.8]{OS1}.

\begin{thm}\label{T:twisted Orlicz alg-poly and exp weight-Poly growth}
%\begin{exm}\label{E:PG group-poly and exp weight-Dual alg}
Let $G$ be a compactly generated group of polynomial growth, let $\Om\in \Zbw$, and let $\om$ be the weight associated to $\Om$. Then there are $u,v\in \Sm^\Psi(G)$ satisfying \eqref{Eq:2-cocycle bdd sum} so that $(L^\Phi(G),\tw)$ is twisted Orlicz algebras if $\om$ is either one of the following weights:\\
$(i)$ $\om=\om_\beta$, the polynomial weight \eqref{Eq:poly weight-defn} with $1/\om \in \Sm^\Psi(G)$;\\
$(ii)$ $\om=\sg_{\alpha,C}$, the subexponential weight \eqref{Eq:Expo weight-defn};\\
$(iii)$ $\om=\rho_{\gamma,C}$, the subexponential weight \eqref{Eq:Expo weight II-defn}.
\end{thm}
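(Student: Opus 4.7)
The plan is to find, in each case, explicit $u,v \in \Sm^\Psi(G)$ satisfying $|\Om|(s,t) = \om(st)/[\om(s)\om(t)] \leq u(s) + v(t)$, and then invoke Theorem~\ref{T:twisted Orlicz alg}(ii). A general template: if one can produce $w \from G \to [0,\infty)$ with $\om(st)/[\om(s)\om(t)] \leq w(s) + w(t)$ pointwise and $w \in \Sm^\Psi(G)$, then $u=v=w$ works. Thus the task reduces to (a) a pointwise Leibniz-type bound on $|\Om|$ in terms of the length function $\tau$, and (b) verifying the bounding function lies in $\Sm^\Psi(G)$ via the strict polynomial-growth estimate $\lambda(U^n\setminus U^{n-1}) \leq C_2 n^d$ implicit in \eqref{Eq:stric poly growth}.

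For (i), I would use subadditivity of $\tau$ together with the elementary inequality
$$\frac{1+a+b}{(1+a)(1+b)} \le \frac{1}{1+a} + \frac{1}{1+b},$$
and then raise to the power $\beta$ via $(x+y)^\beta \le 2^{\max(\beta-1,0)}(x^\beta+y^\beta)$, to obtain $|\Om|(s,t) \le C\bigl(\om_\beta(s)^{-1} + \om_\beta(t)^{-1}\bigr)$. The required membership $1/\om_\beta \in \Sm^\Psi(G)$ is the standing hypothesis.

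For (ii) and (iii), both weights are submultiplicative, so the crude bound $|\Om|(s,t) \le 1$ is valid but useless on non-compact $G$ since $1 \notin \Sm^\Psi(G)$. The key is to quantify the \emph{gap} in submultiplicativity. Writing $\om = e^{h\circ\tau}$, for each of $h(x)=Cx^\alpha$ and $h(x)=Cx/(\ln(1+x))^\gamma$, I would establish a concavity-type inequality of the form $h(a)+h(b)-h(a+b) \ge c\,\eta\bigl(\min(a,b)\bigr)$ for all $a,b\ge 0$, where $c>0$ and $\eta\from [0,\infty)\to [0,\infty)$ is unbounded. In case (ii), Bernoulli's inequality $(1+x)^\alpha \le 1+\alpha x$ gives $(a+b)^\alpha \le b^\alpha + \alpha a b^{\alpha-1}$ for $a\le b$, whence $\eta(x)=x^\alpha$ with $c=(1-\alpha)C$. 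Combining with $\tau(st) \le \tau(s)+\tau(t)$ and monotonicity of $h$ yields
$$|\Om|(s,t) \le e^{-c\,\eta(\min(\tau(s),\tau(t)))} \le e^{-c\,\eta(\tau(s))} + e^{-c\,\eta(\tau(t))},$$
so one may take $w(s) = e^{-c\,\eta(\tau(s))}$.

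Finally, to verify $w \in \Sm^\Psi(G)$, I would estimate $\int_G \Psi(\alpha w(s))\,ds$ by summing over the ``annuli'' $U^n \setminus U^{n-1}$, invoking \eqref{Eq:stric poly growth} and the fact that $\Psi(x)/x$ is non-decreasing (so $\Psi(x)\le \Psi(1)x$ on $[0,1]$). The resulting series $\sum_n n^d \Psi(\alpha e^{-c\,\eta(n)})$ converges for every $\alpha>0$, since $\eta(n)$ eventually dominates any power of $\ln n$; this places $\alpha w$ in $\mathcal{L}^\Phi$-type class for every $\alpha$, hence $w \in \Sm^\Psi(G)$. The main obstacle is the sharp concavity-gap estimate in case (iii), whose exponent $Cx/(\ln(1+x))^\gamma$ is slowly varying rather than a clean power; I would handle it by differentiating $h$ and applying the mean value theorem while carefully tracking the logarithmic correction, or by comparing $h$ to $Cx^{1-\epsilon}$ on dyadic intervals to reduce to the $x^\alpha$ analysis of case (ii).
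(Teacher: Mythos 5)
The paper itself contains no proof of this theorem: it is imported verbatim from \cite{OS1} (Corollary 5.3 and Theorem 5.8 there), and the only internal trace of the intended argument is the closing remark of Section 2, which records that $1/\om_\beta\in\Sm^\Psi(G)$ precisely when $\beta>d/l$ --- a fact verified in \cite{OS1} by exactly the annulus-and-series criterion you propose. So your overall strategy --- produce a single non-negative $w\in\Sm^\Psi(G)$ with $|\Om|(s,t)\le w(s)+w(t)$ by quantifying the subadditivity defect of the exponent of $\om$, then test membership in $\Sm^\Psi(G)$ by estimating $\sum_n\lambda(U^n\setminus U^{n-1})\,\Psi(\alpha w)$ using \eqref{Eq:stric poly growth} and $\Psi(x)\le \Psi(x_0)x/x_0$ for $x\le x_0$ --- is the right one and, as far as one can compare with the cited source, the same one. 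Cases (i) and (ii) are essentially complete as you describe them: the inequality $(1+a+b)\le(1+a)+(1+b)$ handles the polynomial weight, Bernoulli gives $a^\alpha+b^\alpha-(a+b)^\alpha\ge(1-\alpha)a^\alpha$ for $a\le b$, and $\sum_n n^d e^{-cn^\alpha}<\infty$ for every $c>0$.

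The one step you should not leave to your fallback is case (iii). Comparing $h(x)=Cx/(\ln(1+x))^\gamma$ with $Cx^{1-\epsilon}$ goes the wrong way ($h$ eventually dominates every $x^{1-\epsilon}$, being linear up to a slowly varying correction), and in any event pointwise domination between exponents does not transfer to domination between their subadditivity defects $h(a)+h(b)-h(a+b)$, which is the quantity you actually need. Your first suggestion (a direct estimate) does work and requires no calculus: for $1\le a\le b$, bound $(a+b)(\ln(1+a+b))^{-\gamma}$ by $a(\ln(1+a+b))^{-\gamma}+b(\ln(1+b))^{-\gamma}$ to get $h(a)+h(b)-h(a+b)\ge Ca\bigl[(\ln(1+a))^{-\gamma}-(\ln(1+a+b))^{-\gamma}\bigr]\ge Ca\bigl[(\ln(1+a))^{-\gamma}-(\ln(1+2a))^{-\gamma}\bigr]=:C\eta_0(a)$, where $\eta_0(a)\sim\gamma(\ln 2)\,a(\ln a)^{-\gamma-1}$ as $a\to\infty$. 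This still dominates every multiple of $\ln n$, so $\sum_n n^d e^{-C\eta_0(n)}<\infty$ and the annulus argument closes. (If you want the step $e^{-C\eta_0(\min(a,b))}\le e^{-C\eta_0(a)}+e^{-C\eta_0(b)}$ to be literal, replace $\eta_0$ by the non-decreasing minorant $x\mapsto\inf_{y\ge x}\eta_0(y)$, which changes nothing asymptotically.)
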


We finish this section by pointing out that in the proof of \cite[Corollary 5.3]{OS1}, it is shown that $1/\om_\beta \in \Sm^\Psi(G)$ if $\beta> \frac{d}{l}$,
where $d:=d(G)$ is the degree of the growth of $G$ and
$l\geq 1$ is such that $\lim_{x\to 0^+}\frac{\Psi(x)}{x^l}$ exists.

%concrete upper bounds on the degree of the polynomial weight \eqref{Eq:poly weight-defn} based on the growth of $G$ were given ensuring that the assumption of Theorem \ref{T:twisted Orlicz alg-poly and exp weight-Poly growth}(i) satisfies.

%Moreover, in \cite[Corollary 5.3 and Theorem 5.8]{OS1}, the existence of functions $u,v$ were shown obtained in $L^\Psi(G)$

\section{Approximate identities}

Let $A$ be a Banach algebra, and let $X$ be a Banach left $A$-module. Recall that a net $(e_\alpha)_{\alpha\in\Lambda}$ in the Banach
algebra $A$ is called a left approximate identity for
$X$ in $A$ if $\lim_{\alpha}\|e_\alpha x-x\|_X=0$ for all $x\in X$. If there exists a
$K>0$ such that $\|e_\alpha\| \le K$ for all $\alpha\in\Lambda$,
then $(e_\alpha)_{\alpha\in \Lambda}$ is called a bounded left
approximate identity for $X$ in $A$. The concepts of (bounded) right approximate identity and (bounded) approximate
identity are defined similarly for Banach right $A$-modules and Banach $A$-bimodules. Since $A$ can be viewed as
Banach $A$-bimodule on itself, these concepts apply to the algebra as well.

In this section, we investigate when twisted Orlicz algebras have (bounded or unbounded) approximate identities and/or when they are unital. To do this and obtain our results, we need to restrict ourselves to the twisted actions coming from 2-cocyles in $\Zbw$. Our results may be compared with the corresponding ones in \cite[Section 4]{OO} where they were obtained for (untwisted) weighted Orlicz algebras with the extra assumption that $L^\Phi(G)\subset L^1(G)$.

The following lemma, which is somewhat straightforward, shows that in many cases, twisted Orlicz algebras can have approximate identities as a module over twisted group algebras.

\begin{lem}\label{L:a.i. Orlicz space}
Let $G$ be a locally compact group, let $\Om\in \Zbw$, and let $\om$ be a weight associated to $|\Om|$. Suppose that
$\{U_i\}_{i\in I}$ be a symmetric neighborhoods of the identity in $G$ with $U_i \searrow \{e\}$ as $i\to \infty$, and put
$e_i=1_{U_i}/\lambda(U_i)$ $(i\in I)$. Then:\\
$(i)$ $\{e_i\om\}_{i\in I}$ is a bounded approximate identity for $(L^1(G),\tw)$;\\
$(ii)$ $\{e_i\om\}_{i\in I}$ is an approximate identity for $(\Sm^\Phi(G),\tw)$.
\end{lem}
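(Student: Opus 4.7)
The plan is to pass from the twisted convolution with $\Om$ to a ``cleaner'' twisted convolution with $\Om_\T$ by absorbing the coboundary part into the function. Writing $\Om=|\Om|\Om_\T$ and using $|\Om|(s,s^{-1}t)=\om(t)/(\om(s)\om(s^{-1}t))$, a direct computation yields the key identity
\begin{align*}
(e_i\om)\tw g\,(t)=\om(t)\,(e_i\tc h)(t),\qquad h:=g/\om,
\end{align*}
and symmetrically $g\tw(e_i\om)=\om\cdot(h\tc e_i)$. Two-sided convergence of $\{e_i\om\}$ thus reduces to convergence of $e_i\tc h\to h$ and $h\tc e_i\to h$ in appropriate weighted norms, with the advantage that $\Om_\T$ is continuous and $\T$-valued with $\Om_\T(e,t)=1$, placing us very close to the classical averaging argument for $(L^1(G),*)$.

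For part (i), I would first extract the uniform bound $\|e_i\om\|_1=\tfrac{1}{\lambda(U_i)}\int_{U_i}\om\le M$ for all sufficiently small $U_i$. This uses that $|\Om|\in L^\infty$ forces the a.e.\ inequality $\om(st)\le\|\Om\|_\infty\om(s)\om(t)$, which together with local integrability of $\om$ produces (via a Steinhaus-type argument) essential boundedness of $\om$ on a neighborhood of $e$. The key identity then reformulates the convergence as
\begin{align*}
\|(e_i\om)\tw g-g\|_1=\|e_i\tc h-h\|_{L^1_\om},
\end{align*}
where $\|h\|_{L^1_\om}=\|g\|_1$. By density of $C_c(G)$ in $L^1(G)$ combined with the uniform bound on $\{e_i\om\}$, it suffices to treat $g\in C_c(G)$; then $h\in C_c(G)$ and $\supp(e_i\tc h-h)\subseteq U_1\supp(h)=:K$ is a fixed compact set. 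Uniform continuity of $h$ together with continuity of $\Om_\T$ at $(e,\cdot)$ produces an estimate $|(e_i\tc h)(t)-h(t)|\le\varepsilon_i$ uniform in $t\in K$ with $\varepsilon_i\to 0$, and local integrability of $\om$ yields $\|e_i\tc h-h\|_{L^1_\om}\le\varepsilon_i\int_K\om\to 0$. The analogous argument with $h\tc e_i$ in place of $e_i\tc h$ handles right-sided convergence.

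For part (ii), the same identity gives $\|(e_i\om)\tw g-g\|_\Phi=\|\om(e_i\tc h-h)\|_\Phi$, and density of $C_c(G)$ in $\Sm^\Phi(G)$ reduces the argument to $g\in C_c(G)$, for which the uniform estimate $|(e_i\tc h)(t)-h(t)|\le\varepsilon_i 1_K(t)$ holds as above. The function $\om 1_K$ lies in $\Sm^\Phi(G)$ since $\om$ is essentially bounded on $K$ (propagating the local bound near $e$ via the submultiplicative estimate), whence
\begin{align*}
\|(e_i\om)\tw g-g\|_\Phi\le\varepsilon_i\|\om 1_K\|_\Phi\to 0.
\end{align*}
The main technical obstacle is the extraction of suitable local boundedness of $\om$ from the hypotheses on $\Om$; this is needed both for the uniform $L^1$-bound on $\{e_i\om\}$ in (i) and for the finiteness of $\|\om 1_K\|_\Phi$ in (ii). Once this is in place, the averaging argument runs essentially parallel to the classical one for untwisted group algebras, with continuity of $\Om_\T$ and $\Om_\T(e,\cdot)=1$ ensuring that the twist contributes only a vanishing error in the limit; the absence of a uniform $\|\cdot\|_\Phi$-bound on $\{e_i\om\}$ in (ii) is compensated by the fact that the error is supported on the fixed compact set $K$.
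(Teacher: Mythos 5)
Your proposal is correct, and its opening move --- factoring the weight out of the twisted convolution to reduce to the $\T$-valued cocycle, i.e.\ $(e_i\om)\tw g=\om\cdot(e_i\tc h)$ and $g\tw(e_i\om)=\om\cdot(h\tc e_i)$ with $h=g/\om$ --- is exactly the computation the paper performs, written there as a single chain of equalities rather than as a named identity. After that the routes diverge. For (i) the paper estimates directly for arbitrary $f\in L^1(G)$, bounding the error by $\sup_{s\in U_i}\|L_sg-g\|_{1,\om}+\|f\|_1\sup_{t\in G,\,s\in U_i}|\Om_\T(s,s^{-1}t)-1|$, so it leans on norm-continuity of translation in the weighted $L^1$-space and on a uniformity in $t$ over all of $G$ for the cocycle term; your reduction to $g\in C_c(G)$ confines everything to a fixed compact set, where plain continuity of $\Om_\T$ and uniform continuity of $h$ suffice, which is more elementary and arguably more robust on the cocycle term. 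For (ii) the paper does not estimate at all: it invokes Cohen's factorization theorem, writing $f=g_1\tw h\tw g_2$ with $g_1,g_2\in L^1(G)$, $h\in\Sm^\Phi(G)$ via the essential module structure of Theorem \ref{T:twisted Orlicz alg}(i) and the bounded approximate identity from (i), and then reduces $\|\cdot\|_\Phi$-convergence to the $L^1$-convergence already proved; your direct estimate $\|(e_i\om)\tw g-g\|_\Phi\le\varepsilon_i\|\om 1_K\|_\Phi$ on $C_c(G)$ avoids factorization entirely. Two points you should make explicit to close the argument: the density reductions (in both parts, but especially in (ii)) require the operators $f\mapsto(e_i\om)\tw f$ to be uniformly bounded on the relevant space, which for (ii) follows from the $L^1$-module bound of Theorem \ref{T:twisted Orlicz alg}(i) combined with your uniform bound $\sup_i\|e_i\om\|_1<\infty$; and the local essential boundedness of $\om$ extracted by your Steinhaus step is indeed needed --- the paper uses it silently both for the boundedness assertion in (i) and for $e_i\om\in\Sm^\Phi(G)$ --- so spelling it out is an improvement rather than an extra burden.
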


\begin{proof}
%We would repeatedly use the identification between weighted and unweighted
%Orlicz spaces clarified in Lemma \ref{L:twist covn-weight conv}.
(i) Let $f\in L^1(G)$ and put $g=f/\om$. For every $t\in G$ and $i\in I$, we have
\begin{eqnarray*}
e_i\om \tw f(t)-f(t) &=& \int_G e_i(s)\om(s)f(s^{-1}t)\Om(s,s^{-1}t)ds-f(t) \\ &=&
%\int_G e_i(s)(f(s^{-1}t)\frac{\om(t)}{\om(s^{-1}t)}\Om_\T(s,s^{-1}t)-f(t))ds \\ &=&
\om(t) \int_G e_i(s)(g(s^{-1}t)\Om_\T(s,s^{-1}t)-g(t))ds
\\ &=& \om(t) \int_G e_i(s)(g(s^{-1}t)-g(t))\Om_\T(s,s^{-1}t)ds
\\ &+& \om(t) \int_G e_i(s)g(t)(\Om_\T(s,s^{-1}t)-1) ds.
\end{eqnarray*}
Therefore, by applying Fubini's theorem, we get
\begin{eqnarray*}
\|e_i \om\tw f-f\|_{1} &\leq &
\int_G e_i(s)\int_G \om(t) |g(s^{-1}t)-g(t)| dt ds
\\ &+& \int_G e_i(s) \int_G |f(t)(\Om_\T(s,s^{-1}t)-1)| dt ds \\ &\leq &
\sup_{s\in \supp\,e_i} \|L_sg-g\|_{1,\om} + \|f\|_1\sup_{t\in G, s\in \supp\,e_i} |\Om_\T(s,s^{-1}t)-1|,
\end{eqnarray*}
where the last line approaches 0 as $i\to \infty$. Similarly, we can show that
$\|f\tw e_i \om-f\|_{1}$ as $i\to \infty$.\\
(ii) It follows easily from \cite[Proposition 3.4.3]{rao} that $\{e_i\om\}_{i\in I}\subset \Sm^\Phi(G)$. Let $f\in \Sm^\Phi(G)$. Since $\Sm^\Phi(G)$ is an essential Banach $L^1(G)$-bimodule and $(L^1(G),\tw)$ has a bounded approximate identity, by the Cohen's factorization theorem, there are $g_1,g_2\in L^1(G)$ and $h\in \Sm^\Phi(G)$ such that $f=g_1\tw h\tw g_2$. We have
$$\|e_i\tw f-f\|_{\Phi}\leq \|e_i\tw g_1-g_1\|_{1}\|h\tw g_2\|_{\Phi}\to 0$$
as $i\to \infty$ by (i). Similarly, we can show that $\|f\tw e_i-f\|_{\Phi}\to 0$ as $i\to \infty$.
%Let $f\in \Sm_\om^\Phi(G)$. Since $\Sm_\om^\Phi(G)$ is an essential Banach $L_\om^1(G)$-bimodule and $(L^1_\om(G),\tc)$ has a bounded approximate identity, by the Cohen's factorization theorem, there are $g_1,g_2\in L^1_\om(G)$ and $h\in \Sm_\om^\Phi(G)$ such that $f=g_1\tc h\tc g_2$. We have
%$$\|e_i\tc f-f\|_{\Phi, \om}\leq \|e_i\tc g_1-g_1\|_{1,\om}\|h\tc g_2\|_{\Phi,\om}\to 0$$
%as $i\to \infty$ by (i). Similarly, we can show that $\|f\tc e_i-f\|_{\Phi, \om}\to 0$ as $i\to \infty$.
\end{proof}

The following theorem demonstrate a contrast to the preceding lemma in the sense
that the existence of a bounded approximate identity for the
twisted Orlicz algebras will force the underlying group to be discrete.

\begin{thm}\label{T:orlicz space-bai}
Let $G$ be a locally compact group, let $\Om\in \Zbw$, and let $\om$ be a weight associated to $|\Om|$.
Then the following are equivalent:\\
%$(i)$ $\Sm^\Phi(G)$ has a bounded left (right) approximate identity in $L^\Phi(G)$;\\
$(i)$ There is a bounded net $\{f_j\}\subset L^\Phi(G)$ so that for all $g\in C_c(G)$,
$\lim_{j\to \infty} f_j\tw g=g$ in $L^\Phi(G)$;\\
$(ii)$ There is a bounded net $\{f_j\}\subset L^\Phi(G)$ so that for all $g\in C_c(G)$,
$\lim_{j\to \infty}g\tw f_j=g$ in $L^\Phi(G)$;\\
%$(iii)$ $L^1 (G)\subseteq \Sm^\Phi(G)$;\\
$(iii)$ $L^1 (G)\subseteq L^\Phi(G)$;\\
%$(v)$ $L^\Psi (G)\subseteq L^\infty(G)$;\\
$(iv)$ $G$ is discrete.
% or there is $M>0$ such that
%\begin{align}\label{Eq:L^1 subset Orlicz space}
%\Phi(x)\le Mx \ \ (x \geq 0).
%\end{align}
%$(vii)$ Either $G$ is discrete or there is $N>0$ such that
%\begin{align}\label{Eq:L^1 subset Orlicz space-II}
%\Psi(x)=\infty\ \ (x > N).
%\end{align}
\end{thm}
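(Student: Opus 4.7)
The plan is to prove (iv) $\Rightarrow$ (i), (ii), (iii) directly, and to deduce (iv) from each of (i), (ii), (iii) by means of a single uniform inequality that is contradicted in the non-discrete case.

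For (iv) $\Rightarrow$ (i), (ii), note that when $G$ is discrete the Dirac mass $\delta_e\in\ell^\Phi(G)$ is a two-sided identity under $\tw$, since the normalization $\Om(e,t)=\Om(t,e)=1$ yields $\delta_e\tw g=g\tw\delta_e=g$; the constant net $f_j\equiv\delta_e$ then satisfies both conditions. For (iv) $\Rightarrow$ (iii), any $f\in\ell^1(G)$ has $\{s:|f(s)|\geq 1\}$ finite, and on the complement the convexity estimate $\Phi(t)\leq t\Phi(1)$ on $[0,1]$ (using $\Phi(0)=0$) gives $\sum_s\Phi(|f(s)|)<\infty$, so $f\in\ell^\Phi(G)$.

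For the reverse direction I aim to extract, from any of (i), (ii), (iii), the uniform inequality
\[
\|g\|_\Phi\leq K\|g\|_1 \qquad (g\in C_c(G))
\]
for some constant $K>0$. Under (i), Theorem \ref{T:twisted Orlicz alg}(i) furnishes the $L^1$-bimodule bound $\|f_j\tw g\|_\Phi\leq C\|f_j\|_\Phi\|g\|_1\leq CM\|g\|_1$ uniformly in $j$, and passing to the norm limit produces the desired estimate. Condition (ii) is handled symmetrically via the left action. Under (iii), both $L^1$- and $L^\Phi$-convergence yield pointwise a.e.\ convergence along a subsequence, so the inclusion $L^1(G)\hookrightarrow L^\Phi(G)$ has closed graph; the closed graph theorem then gives continuity, which restricts to the required bound on $C_c(G)$.

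Assuming $G$ is not discrete, I can choose a relatively compact open neighborhood $U$ of $e$ with $\lambda(U)$ arbitrarily small, and by Urysohn's lemma pick $g_U\in C_c(G)$ with $0\leq g_U$, $\supp g_U\subseteq U$, and $\|g_U\|_1=1$. H\"older's inequality \eqref{Eq:Holder inequality} gives $1=\int g_U\cdot 1_U\,ds\leq\|g_U\|_\Phi\,N_\Psi(1_U)$, and a direct computation from \eqref{Eq:Orlicz Luxemburg defn} (using the strict monotonicity and continuity of $\Psi$ to invert) shows $N_\Psi(1_U)=1/\Psi^{-1}(1/\lambda(U))$. Hence $\|g_U\|_\Phi\geq\Psi^{-1}(1/\lambda(U))\to\infty$ as $\lambda(U)\to 0$, contradicting the uniform bound. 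The main obstacle is establishing the inequality $\|g\|_\Phi\leq K\|g\|_1$ from (i) and (ii); this is resolved by observing that $C_c(G)\subseteq L^1(G)$, so the general bimodule estimate of Theorem \ref{T:twisted Orlicz alg}(i) applies directly, without any further hypothesis on $\Om$.
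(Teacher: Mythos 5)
Your proof is correct, and its skeleton coincides with the paper's: both reduce (i) and (ii) to the uniform estimate $\|g\|_\Phi\leq K\|g\|_1$ on $C_c(G)$ (which is essentially (iii)) via the $L^1(G)$-bimodule bound of Theorem \ref{T:twisted Orlicz alg}(i), and both then show this estimate is incompatible with non-discreteness because $\Psi$ is assumed continuous. Where you differ is in how that contradiction is extracted. The paper takes the net $e_i=1_{U_i}/\lambda(U_i)$ from Lemma \ref{L:a.i. Orlicz space}, quotes \cite[Corollary 3.4.7]{rao} to evaluate $\|e_i\|_\Phi$ up to constants as $\bigl[\lambda(U_i)\Phi^{-1}(1/\lambda(U_i))\bigr]^{-1}$, deduces from boundedness and convexity that $\Phi(x)\leq Mx$ for all $x$, and then observes that the complementary function of a linear Young function takes the value $\infty$, contradicting the continuity of $\Psi$. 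You instead test a normalized $g_U\in C_c(G)$ supported in $U$ against $1_U\in L^\Psi(G)$ via H\"older's inequality \eqref{Eq:Holder inequality} and compute $N_\Psi(1_U)=1/\Psi^{-1}(1/\lambda(U))$ directly from the Luxemburg norm, so that $\|g_U\|_\Phi\geq\Psi^{-1}(1/\lambda(U))\to\infty$. The two computations are dual to one another (via $t\leq\Phi^{-1}(t)\Psi^{-1}(t)\leq 2t$), but yours is more self-contained: it avoids the external citation and the detour through the complementary function of $Mx$, and it isolates exactly where the standing hypothesis that $\Psi$ is finite-valued and continuous enters. You are also more explicit than the paper in justifying that (iii) yields a \emph{bounded} inclusion $L^1(G)\hookrightarrow L^\Phi(G)$ (closed graph plus a.e.\ subsequence extraction), a step the paper uses silently when it asserts that $\{e_i\om\}$ is bounded in $L^\Phi(G)$; your separate verification of (iv)$\Rightarrow$(iii) is harmless redundancy given the implication cycle.
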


\begin{proof}
%$\Longleftarrow"$ clear!\\
(i)$\Longrightarrow$(iii) By our assumption and Theorem \ref{T:twisted Orlicz alg}(i),
there is $K>0$ such that for every $g\in C_c(G)$,
$$\|g\|_\Phi=\lim_{j\to \infty} \|f_j\tw g\|_\Phi\leq K\|g\|_1\limsup_{j\to \infty} \|f_j\|_\Phi.$$
The final result follows since $C_c(G)$ is norm dense in $(L^1(G),\|\cdot\|_1)$. The part (ii)$\Longrightarrow$(iii) is proven similarly. It is also clear that
(iv)$\Longrightarrow$(i) and (ii). Hence it remains to show (iii)$\Longrightarrow$(iv). Suppose that
(iii) holds. Let $\{e_i\om\}_{i\in I}$ be the bounded approximate identity in $L^1(G)$ for $\Sm^\Phi(G)$ constructed in Lemma \ref{L:a.i. Orlicz space}.
Then, by our assumption, $\{e_i\om\}_{i\in I}$ is bounded in $L^\Phi(G)$. Furthermore, since $1/\om\in L^\infty(G)$,
we have
$$\sup_{i\in I} \|e_i\|_\Phi \leq \|1/\om\|_\infty\sup_{i\in I} \|e_i\om\|_\Phi,$$
implying that $\{e_i\}_{i\in I}$ is bounded in $L^\Phi(G)$. In particular, by \cite[Cororllary 3.4.7]{rao},
$$\sup_{i\in I} 1/\lambda(U_i)[\Phi^{-1}(1/\lambda(U_i))]^{-1}\cong \sup_{i\in I} \|e_i\|_\Phi<\infty,$$
or equivalently, there is a constant $M>0$ such that
\begin{align}\label{Eq:Young function bdd linearly}
\Phi(1/\lambda(U_i))\leq M/\lambda(U_i) \ \ (i\in I).
\end{align}
If $G$ is not discrete, then we can assume that $\lambda(U_i)\to 0$ as $i\to \infty$. For every $x\in \R_+$, pick
$i\in I$ such that $x\lambda(U_i)\leq 1$. Then, it follows from \eqref{Eq:Young function bdd linearly} and the convexity of $\Phi$ that
$$\Phi(x)=\Phi(x\lambda(U_i)/\lambda(U_i))\leq x\lambda(U_i)\Phi(1/\lambda(U_i))\leq Mx.$$
On the other hand, for every $M>0$, the complementary Young function of $\Phi_0(x)=Mx$ is given by $\Psi_0(y)=0$ if $y\leq 1/M$ and $\Psi_0(y)=\infty$ otherwise. Hence, by \cite[Proposition 2.1.2]{rao}, $\Psi$ obtains infinite values so that it is not continuous which contradicts our assumption. Thus $G$ must be discrete, i.e. (iv) holds.
\end{proof}

We are now ready to present the main result of this section which is mostly summarization of what we obtained above.

\begin{thm}\label{T:Orlicz space-bai-non discrete}
Let $G$ be a locally compact group, let $\Om\in \Zbw$, and let $\om$ be a weight associated to $|\Om|$.
If $(L^\Phi(G),\tw)$ is a twisted Orlicz algebra, then the following hold:\\
$(i)$ $(S^\Phi(G),\tw)$ has an approximate identity consisting of compactly supported functions;\\
$(ii)$ For $G$ non-discrete, an approximate identity of $(S^\Phi(G),\tw)$ is also an approximate identity for $(L^\Phi(G),\tw)$ if and only if $L^\Phi(G)=\Sm^\Phi(G)$ if and only if $\Phi\in \Delta_2$;\\
$(iii)$ For $G$ non-discrete, neither $(L^\Phi(G),\tw)$ nor $(S^\Phi(G),\tw)$ has a bounded approximate identity;\\
$(iv)$ Either $(L^\Phi(G),\tw)$ or $(S^\Phi(G),\tw)$ has a unit if and only if
$G$ is discrete.
\end{thm}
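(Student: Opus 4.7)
The plan is to peel off (i), (iii), and (iv) as quick consequences of Lemma~\ref{L:a.i. Orlicz space} and Theorem~\ref{T:orlicz space-bai}, leaving (ii) as the substantive case. For (i), I take the net $\{e_i\om\}_{i\in I}$ from Lemma~\ref{L:a.i. Orlicz space}(ii); it is already an approximate identity for $(\Sm^\Phi(G),\tw)$, and since $e_i=1_{U_i}/\lambda(U_i)$ is supported in the compact neighbourhood $U_i$, each $e_i\om$ has compact support. For (iii), suppose for contradiction that $(L^\Phi(G),\tw)$ or $(\Sm^\Phi(G),\tw)$ admits a bounded left approximate identity $\{f_j\}$; since $C_c(G)\subseteq \Sm^\Phi(G)\subseteq L^\Phi(G)$, the convergence $\|f_j\tw g-g\|_\Phi\to 0$ holds for every $g\in C_c(G)$, so condition~(i) of Theorem~\ref{T:orlicz space-bai} is verified and forces $G$ to be discrete, contradicting the hypothesis; the right-sided case is symmetric. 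For (iv), if $G$ is discrete then $\delta_e\in C_c(G)\subseteq \Sm^\Phi(G)$ and normalization of $\Om$ gives $\delta_e\tw f=f\tw\delta_e=f$, so $\delta_e$ is a two-sided unit in both algebras; conversely a unit is automatically a bounded approximate identity, so (iii) forces $G$ to be discrete.

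Part (ii) is where the real work lies. The equivalence $\Phi\in\Delta_2 \Longleftrightarrow L^\Phi(G)=\Sm^\Phi(G)$ is quoted from Rao--Ren in the preliminaries, and when the two spaces coincide any approximate identity for one is trivially an approximate identity for the other. My plan for the nontrivial direction is: assume the approximate identity $\{e_i\om\}$ of $(\Sm^\Phi(G),\tw)$ is also an approximate identity for $(L^\Phi(G),\tw)$, fix $g\in L^\Phi(G)$, and show each product $e_i\om\tw g$ already lies in $\Sm^\Phi(G)$; combined with the hypothesis $e_i\om\tw g\to g$ in $L^\Phi(G)$ and the closedness of $\Sm^\Phi(G)$ in $L^\Phi(G)$, this will give $g\in \Sm^\Phi(G)$, hence $L^\Phi(G)=\Sm^\Phi(G)$.

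To verify $e_i\om\tw g\in \Sm^\Phi(G)$ I use the translation-continuity characterization of $\Sm^\Phi(G)$ (namely $h\in \Sm^\Phi(G)$ iff $\|L_r h-h\|_\Phi\to 0$ as $r\to e$). Performing the left-invariant substitution $u=rs$ and applying the cocycle identity~\eqref{Eq:2-cocycle relation} to rewrite $\Om(r^{-1}u,u^{-1}t)=\Om(r^{-1},t)\Om(u,u^{-1}t)/\Om(r^{-1},u)$ transforms the translated convolution into
\[
L_r(e_i\om\tw g)(t)=\Om(r^{-1},t)\,(h_r\tw g)(t),\qquad h_r(u)=\frac{L_r(e_i\om)(u)}{\Om(r^{-1},u)}.
\]
Since $e_i\om$ is compactly supported and in $L^1(G)$, $L^1$-continuity of left translation yields $h_r\to e_i\om$ in $L^1(G)$; the bimodule estimate from Theorem~\ref{T:twisted Orlicz alg}(i) then gives $h_r\tw g\to e_i\om\tw g$ in $L^\Phi(G)$, while continuity of $\Om_\T$ together with the uniform bound on $|\Om|$ handles the multiplicative factor $\Om(r^{-1},t)\to 1$. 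Splitting the Orlicz norm into a bimodule-convergence piece and a residual piece dominated by $\|\Om\|_\infty$ lets one push $L_r(e_i\om\tw g)\to e_i\om\tw g$ in $L^\Phi$-norm as $r\to e$, placing $e_i\om\tw g$ in $\Sm^\Phi(G)$.

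The main obstacle I foresee is exactly this translation-continuity step in (ii): juggling the two cocycle factors $\Om(r^{-1},t)$ and $\Om(r^{-1},u)$ simultaneously while the weight $\om$ is only assumed locally integrable, so that the convergence $h_r\to e_i\om$ must be established carefully and the cocycle-convergence residue must be controlled via dominated convergence against $\|\Om\|_\infty|e_i\om\tw g|$. The remainder of the theorem is essentially bookkeeping on top of Lemma~\ref{L:a.i. Orlicz space} and Theorem~\ref{T:orlicz space-bai}.
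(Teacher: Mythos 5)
Your handling of (i), (iii) and (iv) is correct and is exactly the paper's route: (i) is the compactly supported net $\{e_i\om\}$ of Lemma~\ref{L:a.i. Orlicz space}(ii), and (iii), (iv) are read off from Theorem~\ref{T:orlicz space-bai} just as you describe. The gap is in (ii). Your argument rests on the claim that $h\in \Sm^\Phi(G)$ \emph{if and only if} $\|L_rh-h\|_\Phi\to 0$ as $r\to e$, and you use the ``if'' direction to place $e_i\om\tw g$ in $\Sm^\Phi(G)$. That direction is false for the class of Young functions admitted here. Take $G=\R$ and a continuous strictly increasing Young function with $\Phi(x)=e^{-1/x}$ for $0<x\le 1/2$ (extended quadratically beyond $1/2$; its complementary function is finite, continuous and strictly increasing, but $\Phi\notin\Delta_2$ since $\Phi(2x)/\Phi(x)=e^{1/(2x)}\to\infty$ as $x\to 0^+$). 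Let $f=\sum_{n\ge 2}\frac{1}{n}1_{I_n}$ with $I_n$ pairwise disjoint intervals of length $e^{3n/4}$ separated by gaps of length at least $1$. Then $\int_\R\Phi(f)=\sum_n e^{-n/4}<\infty$ while $\int_\R\Phi(2f)=\sum_n e^{n/4}=\infty$, so $f\in L^\Phi(\R)\setminus\Sm^\Phi(\R)$; yet for $0<r<1$ the function $|L_rf-f|$ takes the value $1/n$ on a set of measure $2r$ for each $n$, so $\int_\R\Phi(|L_rf-f|/k)\le 2r\sum_n\Phi(1/(nk))=2rC_k$ with $C_k<\infty$ for every $k>0$, whence $N_\Phi(L_rf-f)\to 0$ as $r\to 0$. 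Thus norm continuity of translations does not characterize membership in $\Sm^\Phi(G)$.

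The same example defeats the intermediate claim your strategy actually needs, namely that $e_i\om\tw g\in\Sm^\Phi(G)$ for \emph{every} $g\in L^\Phi(G)$: with $e_\eta=1_{[0,\eta]}/\eta$ one has $e_\eta*f\ge 1/n$ on all but a length $2\eta$ of $I_n$, so $\int_\R\Phi(2\,e_\eta*f)=\infty$ and $e_\eta*f\notin\Sm^\Phi(\R)$. In other words, $L^1\tw L^\Phi$ need not land inside $\Sm^\Phi$ when $\Phi\notin\Delta_2$, which is precisely the phenomenon that (ii) is about. The paper's proof sidesteps all of this: it notes that an approximate identity of $(\Sm^\Phi(G),\tw)$ of the form produced in Lemma~\ref{L:a.i. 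Orlicz space} is a bounded approximate identity for $(L^1(G),\tw)$, hence acts as an approximate identity on all of $L^\Phi(G)$ precisely when $L^\Phi(G)$ is an \emph{essential} Banach $L^1(G)$-bimodule, and then invokes Rao--Ren (Theorem 3.4.14, Corollary 5.3.2 and the discussion on p.~166) for the equivalence of essentiality with $L^\Phi(G)=\Sm^\Phi(G)$ and with $\Phi\in\Delta_2$ on a non-discrete group. To repair your argument you would have to produce, when $\Phi\notin\Delta_2$, a single $g\in L^\Phi(G)$ on which the approximate identity fails to converge, rather than trying to show that each $e_i\om\tw g$ individually lies in $\Sm^\Phi(G)$.
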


\begin{proof}
Part (i) follows from Lemma \ref{L:a.i. Orlicz space} and parts (iii) and (iv) from Theorem \ref{T:orlicz space-bai}.
For part (ii), we first note that an approximate identity of $(S^\Phi(G),\tw)$ is also an approximate identity for $(L^\Phi(G),\tw)$ if and only if $L^\Phi(G)$ is an essential $L^1(G)$-bimodule. Now the final result follows from \cite[Theorem 3.4.14 and Corollary 5.3.2]{rao} (See also discussion in \cite[P. 166]{rao}).
%if $G$ is discrete, then $\delta_e$ is a unit for both  $(L^\Phi(G),\tw)$ or $(S^\Phi(G),\tw)$. For the opposite direction, suppose that $f\in L^\Phi(G)$ acts as a left unit for $(C_c(G),\tw)$. Let $\{e_i\om\}_{i\in I}$ be the
% approximate identity in $L^1(G)$ for $\Sm^\Phi(G)$ constructed in Lemma \ref{L:a.i. Orlicz space}.
%Then, by Theorem \ref{T:orlicz space-bai}, $\{e_i\om\}_{i\in I}$ is bounded in $L^\Phi(G)$
%so that
%$$\|e_i-f\|_\Phi=\|f\tw e_i\om - f\|_\Phi \to 0 \ \ \text{as}\ \ i \to \infty.$$
%This, together with the fact that $U_i \searrow \{e\}$ as $i\to \infty$, implies that $\supp\,f=\{e\}$, or equivalently, $G$ is discrete. Finally, we note that
%the assumption in (ii) implies that $L^\Phi(G)$ is an essential Banach $L^1(G)$-bimodule implying that $L^\Phi(G)=\Sm^\Phi(G)$ or equivalently
%$\Phi\in \Delta_2$
\end{proof}

%\section{Twisted Orlicz algebras as dual Banach algebras}
\section{Dual Banach algebras}\label{S:Dual Banach algebras}

A Banach algebra $A$ is said to be a {\it dual Banach algebra} if there is a closed submodule $\mathcal{B}$ of $A^*$ such that $A=\mathcal{B}^*$ (\cite[Definition 4.4.1]{Run}). It follows routinely that if a Banach algebra $A$ is a dual space, then it is a dual Banach algebra if and only if multiplication on $A$ is separately $w^*$-continuous. The class of dual Banach algebras includes
von Neumann algebras, measure algebras of locally compact groups, and
$B(E)$, where $E$ is a reflexive Banach space \cite[Examples 4.4.2]{Run}.
In fact, it is shown in \cite[Corollary 3.8]{Daws} that dual Banach algebras are precisely $w^*$-closed subalgebra of $B(E)$, where $E$ is a reflexive Banach space.

In this section, we investigate under what conditions twisted Orlicz algebras
are dual Banach algebras. We will show that this phenomenon occur in majority
of the cases when know we can get an algebra. We start with the following lemma
which shows that whenever $L^\Phi(G)$ is an algebra, there are natural $L^\Phi(G)$-bimodule actions on $L^\Psi(G)$ coming from module actions of
$L^\Phi(G)$ on $L^\Phi(G)^*$.

\begin{lem}\label{L:twisted Orlicz alg-module over dual}
Let $G$ be a locally compact group, and let $\Om\in \Zb$. Suppose that $(L^\Phi(G),\tw)$ is a twisted Orlicz algebra. Then $L^\Psi(G)$ is a Banach $L^\Phi(G)$-bisubmodule of $L^\Phi(G)^*$ so that the twisted actions of $L^\Phi(G)$ on $L^\Phi(G)$ are defined by $(g\in L^\Phi(G)$, $h\in L^\Psi(G)$, $s\in G)$
\begin{align}\label{Eq:twisted mod actions}
g\td h (s)=\int_G g(t)h(st)\Om(s,t)dt  \ \ , \ \ h\td g (s)=\int_G g(t)h(ts)\Om(t,s)dt.
\end{align}
Moreover, for every $f,g\in L^\Phi(G)$ and $h\in L^\Psi(G)$, we have
\begin{align}\label{Eq:twisted convolution vs twisted mod actions}
\la f \tw g , h \ra=\la f , g\td h \ra=\la  g , h\td f \ra.
\end{align}
\end{lem}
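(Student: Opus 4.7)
The plan is to derive the bimodule structure on $L^\Psi(G)\subseteq L^\Phi(G)^*$ from the general dual-module construction (the algebra $(L^\Phi(G),\tw)$ acts on itself, hence on its dual), and to identify the explicit formulas \eqref{Eq:twisted mod actions} via a Fubini-and-change-of-variable computation. At the outset I would record the continuous embedding $L^\Psi(G)\hookrightarrow L^\Phi(G)^*$ via the integration pairing $\la f,h\ra=\int_G f(s)h(s)\,ds$, with $|\la f,h\ra|\leq \|f\|_\Phi\,N_\Psi(h)$ by \eqref{Eq:Holder inequality}.

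The heart of the argument is the duality \eqref{Eq:twisted convolution vs twisted mod actions}. Expanding
$$
\la f\tw g, h\ra=\int_G h(s)\int_G f(t)g(t^{-1}s)\Om(t,t^{-1}s)\,dt\,ds,
$$
I would apply Fubini to swap the order of integration and then make the left-invariant substitution $u=t^{-1}s$ in the inner integral. This produces
$$
\int_G f(t)\int_G g(u)\Om(t,u)h(tu)\,du\,dt=\int_G f(t)(g\td h)(t)\,dt=\la f, g\td h\ra,
$$
with $g\td h$ precisely as in \eqref{Eq:twisted mod actions}. The second identity $\la f\tw g,h\ra=\la g,h\td f\ra$ is obtained symmetrically, starting from $\la g,h\td f\ra$, interchanging orders, and using the substitution $u=ts$ to reverse the same chain of manipulations.

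The main obstacle is justifying Fubini under the bare hypothesis that $(L^\Phi(G),\tw)$ is a Banach algebra, since absolute integrability of the relevant double integral is not immediate from the norm bound alone. I would circumvent this by first verifying \eqref{Eq:twisted convolution vs twisted mod actions} for test functions $f,g,h\in C_c(G)$, where compact support and boundedness of $\Om$ make Fubini trivial and where $g\td h$, $h\td f$ given by \eqref{Eq:twisted mod actions} visibly lie in $C_c(G)\subseteq L^\Psi(G)$. For general $g\in L^\Phi(G)$ and $h\in L^\Psi(G)$, the functional $f\mapsto \la f\tw g,h\ra$ on $L^\Phi(G)$ is bounded by $C\|g\|_\Phi N_\Psi(h)\|f\|_\Phi$; restricting to $\Sm^\Phi(G)$ and invoking the identification $\Sm^\Phi(G)^*=L^\Psi(G)$, it is represented by a unique element $k\in L^\Psi(G)$, which I would identify with the function $g\td h$ of \eqref{Eq:twisted mod actions} by pairing against a dense family of $f\in C_c(G)$ and applying the already-proved $C_c$-case. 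The same reasoning applied on the right produces $h\td g\in L^\Psi(G)$.

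Finally, the Banach bisubmodule properties---associativity of the two actions and the norm estimates $\|g\td h\|_\Psi,\,\|h\td g\|_\Psi\leq C\|g\|_\Phi N_\Psi(h)$---then follow formally from \eqref{Eq:twisted convolution vs twisted mod actions}, the algebra bound $\|f\tw g\|_\Phi\leq C\|f\|_\Phi\|g\|_\Phi$, and the general principle that the dual of a Banach $A$-bimodule is itself a Banach $A$-bimodule, with associativity on $L^\Psi(G)$ reflecting associativity of $\tw$.
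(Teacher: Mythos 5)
Your proposal is correct and follows essentially the same route as the paper: both establish the duality identity \eqref{Eq:twisted convolution vs twisted mod actions} by Fubini and a left-invariant change of variables, and then read off the estimate $\|g\td h\|_\Psi\leq 2C\|g\|_\Phi N_\Psi(h)$ from the algebra bound together with H\"{o}lder's inequality \eqref{Eq:Holder inequality} and the duality between the Orlicz and Luxemburg norms. The only difference is one of bookkeeping: you justify the interchange of integrals via a $C_c(G)$-density argument and the representation $\Sm^\Phi(G)^*=L^\Psi(G)$, whereas the paper merely notes that $g$ and $h$ vanish off a $\sigma$-finite set (so that $g\td h$ is measurable) and calls the verification of \eqref{Eq:twisted convolution vs twisted mod actions} a straightforward computation.
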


\begin{proof}
Let $g\in L^\Phi(G)$ and $h\in L^\Psi(G)$. Since $\Phi$ and $\Psi$ are continuous and positive on $\R^+$, both $g$ and $h$ vanishes outside of a $\sg$-finite measurable subset of $G$ so that the functions $g\td h$ and $h\td g$ defined in
\eqref{Eq:twisted mod actions} are measurable. Moreover, a straightforward computation shows that for every $f\in L^\Phi(G)$, \eqref{Eq:twisted convolution vs twisted mod actions} holds. Hence, in particular,
\begin{eqnarray*}
|\la f , g\td h \ra| &=& |\la f \tw g , h \ra| \\
&\leq& \|f\tw g\|_\Phi N_\Psi(h) \ \ \ (\text{by}\ \eqref{Eq:Holder inequality})\\
&\leq& C\|f\|_\Phi \|g\|_\Phi N_\Psi(h)\\
&\leq& 2CN_\Phi(f)\|g\|_\Phi N_\Psi(h).
\end{eqnarray*}
Thus, by \eqref{Eq:Orlicz norm} and \eqref{Eq:Orlicz norm-defn relation}, $\|g\td h\|_\Psi \leq 2C\|g\|_\Phi N_\Psi(h)$
implying that $L^\Psi(G)$ is a Banach left $L^\Phi(G)$-module with respect to the twisted actions $\td$. Similarly we can show that it is a right $L^\Phi(G)$-module.
\end{proof}

In lieu of the preceding Lemma and the fact that $L^\Phi(G)=\Sm^\Psi(G)^*$,
in order to show that $L^\Phi(G)$ is a dual Banach algebra, it suffices to show that $\Sm^\Psi(G)$ is a closed under the actions \eqref{Eq:twisted mod actions}
of $L^\Phi(G)$. We will show that under some reasonable conditions, this indeed happens.

\begin{thm}\label{T:twisted Orlicz alg-dual Ban alg}
Let $G$ be a locally compact unimodular group, and let $\Om \in \Zb$. Suppose that there exit non-negative measurable functions $u,v \in \Sm^\Psi(G)$ satisfying \eqref{Eq:2-cocycle bdd sum}. Then $\Sm^\Psi(G)$ is an essential Banach $L^\Phi(G)$-bimodule with respect to the twisted actions $\td$ defined in \eqref{Eq:twisted mod actions}. In particular, $(L^\Phi(G),\tw)$ is a dual Banach algebra.
\end{thm}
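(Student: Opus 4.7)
Combined with Lemma \ref{L:twisted Orlicz alg-module over dual} and the identification $L^\Phi(G) = \Sm^\Psi(G)^*$, the dual Banach algebra conclusion reduces to verifying that the twisted actions $\td$ preserve $\Sm^\Psi(G)$: once this is known, for any net $f_\alpha \to f$ in the $w^*$-topology of $L^\Phi$ and any $g \in L^\Phi$, $h \in \Sm^\Psi$, the identity \eqref{Eq:twisted convolution vs twisted mod actions} gives $\la f_\alpha \tw g, h\ra = \la f_\alpha, g\td h\ra \to \la f, g\td h\ra = \la f\tw g, h\ra$, whence separate $w^*$-continuity of $\tw$. Essentiality is then a separate approximation statement. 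By the continuity of $\td$ from $L^\Phi\times L^\Psi$ into $L^\Psi$ (Lemma \ref{L:twisted Orlicz alg-module over dual}) and the density of $C_c(G)$ in $\Sm^\Psi(G)$, it suffices to prove $g\td h \in \Sm^\Psi(G)$ for $g \in L^\Phi$ and $h \in C_c(G)$.

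\textbf{The main estimate.} Fix such $g$, $h$, and let $K := \supp h$. Plugging \eqref{Eq:2-cocycle bdd sum} into \eqref{Eq:twisted mod actions} yields the pointwise bound
\begin{align*}
|g\td h(s)| \le u(s)F(s) + G(s), \quad F(s) := \int_G |g(t)h(st)|\,dt, \quad G(s) := \int_G |g(t)|v(t)|h(st)|\,dt.
\end{align*}
The change of variable $r = st$ (with $dr = dt$ by left-invariance of Haar measure) gives $F(s) = \int_G (L_s|g|)(r)|h(r)|\,dr$, so H\"older's inequality \eqref{Eq:Holder inequality} together with the left-translation invariance of $N_\Phi$ produces $F(s) \le N_\Phi(g)\|h\|_\Psi$ uniformly in $s$. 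Since $u \in \Sm^\Psi(G)$ is stable under multiplication by bounded functions, $uF \in \Sm^\Psi(G)$.

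\textbf{The $G$-term.} Setting $p := |g|v$, which lies in $L^1(G)$ by another application of H\"older, the same substitution gives $G(s) \le \|h\|_\infty \int_{s^{-1}K} p(t)\,dt$, and the elementary identity $\chi_K(st) = \chi_{K^{-1}}(t^{-1}s^{-1})$ lets this be rewritten as $\|h\|_\infty (p * \chi_{K^{-1}})(s^{-1})$, where $*$ is the ordinary convolution on $G$. Since $\chi_{K^{-1}}$ is bounded with compact support it lies in $\Sm^\Psi(G)$, and the standard fact $L^1(G)*\Sm^\Psi(G) \subseteq \Sm^\Psi(G)$ -- which follows by approximating $p$ in $L^1$ by $C_c$-functions and invoking Young's inequality for Orlicz spaces -- places $p*\chi_{K^{-1}}$ in $\Sm^\Psi(G)$. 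Unimodularity of $G$ ensures that inversion $s\mapsto s^{-1}$ preserves $\Sm^\Psi$, so $G \in \Sm^\Psi(G)$; the order-ideal property of $\Sm^\Psi$ inside $L^\Psi$ then forces $g\td h \in \Sm^\Psi(G)$. The argument for $h\td g$ is symmetric, with the roles of $u$ and $v$ swapped and unimodularity used again to turn the analogue of $F$ into a bounded function.

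\textbf{Essentiality and conclusion.} For essentiality, the net $\{e_i\om\}$ from Lemma \ref{L:a.i. Orlicz space} lies in $\Sm^\Phi(G) \subseteq L^\Phi(G)$. For $h \in C_c(G)$ the functions $(e_i\om)\td h$ have supports uniformly contained in a fixed compact neighbourhood of $\supp h$, and an estimate modelled on the proof of Lemma \ref{L:a.i. Orlicz space} (using the continuity of $\Om_\T$, the uniform continuity of $h$, and $\int e_i\om \to 1$) shows $\|(e_i\om)\td h - h\|_\Psi \to 0$. A diagonalisation combined with the density of $C_c(G)$ in $\Sm^\Psi(G)$ yields $\overline{L^\Phi\td\Sm^\Psi} = \Sm^\Psi$; the right module case is analogous. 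The principal technical obstacle is the analysis of the $G$-term: the passage from a pointwise upper bound to actual membership in $\Sm^\Psi$ depends on identifying $G(s)$ with an evaluation of an ordinary $L^1$-convolution at $s^{-1}$, which then lets one use Young's inequality and the order-ideal property to finish.
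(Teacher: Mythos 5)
Your proof is correct and takes essentially the same route as the paper's: the same decomposition $|\Om|\le u+v$, the same H\"older/translation-invariance bound turning the $u$-term into a bounded multiple of $u\in\Sm^\Psi(G)$, the identification of the $v$-term with an ordinary $L^1$-convolution that lands in $\Sm^\Psi(G)$, and the order-ideal property of $\Sm^\Psi(G)$ inside $L^\Psi(G)$ to conclude, followed by the standard reduction of separate $w^*$-continuity to invariance of the predual. The only differences are cosmetic: the paper avoids your reduction to $h\in C_c(G)$ by using the exact identity $\int_G|g(t)h(st)|v(t)\,dt=(|h|*(\check g\check v))(s)$ for arbitrary $h\in\Sm^\Psi(G)$, and it does not prove the essentiality claim at all, which your (somewhat informal) approximate-identity sketch attempts to supply.
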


\begin{proof}
We first note that by Theorem \ref{T:twisted Orlicz alg}, $(L^\Phi(G),\tw)$ is a twisted Orlicz algebra.
Let $g\in L^\Phi(G)$ and $h\in \Sm^\Psi(G)$. By Lemma \ref{L:twisted Orlicz alg-module over dual}, $g\td h\in L^\Psi(G)$.
It follows from \eqref{Eq:2-cocycle bdd sum} that for every $s\in G$,
\begin{eqnarray*}
|g\td h|(s) &\leq& \int_G |g(t)h(st)|u(s)dt + \int_G |g(t)h(st)|v(t)dt
%\\ &\leq& u(s) \int_G |g(t)h(st)|dt + \int_G |g(t)h(ts)|v(t)dt
\\ &\leq & u(s) N_\Phi(g)\|\delta_{s^{-1}}*h\| _\Psi+ \int_G |g(t)h(st)|v(t)dt \ \ (\text{by} \ \eqref{Eq:Holder inequality})
 \\ &= & u(s) N_\Phi(g)\|h\|_\Psi+ (h*(\check{g}\check{v})(s),%\int_G |g(t)h(ts)|v(t)dt.
\end{eqnarray*}
where $\check{k}(s)=k(s^{-1})$. Thus
$$|g\td h|\leq \|g\|_\Phi\|h\|_\Psi u+ h*(\check{g}\check{v}).$$
By our hypothesis, $u\in \Sm^\Psi(G)$. On the other hand, since $gv\in L^1(G)$ and $G$ is unimodular, $\check{g}\check{v}\in L^1(G)$. Hence $h*\check{g}\check{v}\in \Sm^\Psi(G)$ as
$\Sm^\Psi(G)$ is an $L^1(G)$-bimodule under convolution.
The final results follows since $|g\td h|\leq f$ with $f\in \Sm^\Psi(G)$ implies that $g\td h\in \Sm^\Psi(G)$. Similarly we can show that
$h\td g\in \Sm^\Psi(G)$. In particular, $(L^\Phi(G),\tw)$ is a dual Banach algebra. %\cite[Definition 4.4.1]{Run}.
\end{proof}

The following corollary which is an immediate consequence of Theorem \ref{T:twisted Orlicz alg-dual Ban alg} and Theorem \ref{T:twisted Orlicz alg-poly and exp weight-Poly growth} demonstrates that the conditions in Theorem \ref{T:twisted Orlicz alg-dual Ban alg} are general enough as they can be applied to large classes of twisted Orlicz algebras on compactly generated group of polynomial growth.

\begin{cor}\label{C:twisted Orlicz alg-poly and exp weight-dual Ban alg}
%\begin{exm}\label{E:PG group-poly and exp weight-Dual alg}
Let $G$ be a compactly generated group of polynomial growth, let $\Om\in \Zbw$,
and let $\om$ be the weight associated to $\Om$. Then $(L^\Phi(G),\tw)$ is a dual Banach algebra if $\om$ is either of the following weights:\\
$(i)$ $\om=\om_\beta$, the polynomial weight \eqref{Eq:poly weight-defn} with $1/\om \in \Sm^\Psi(G)$;\\
$(ii)$ $\om=\sg_{\alpha,C}$, the subexponential weight \eqref{Eq:Expo weight-defn};\\
$(iii)$ $\om=\rho_{\gamma,C}$, the subexponential weight \eqref{Eq:Expo weight II-defn}.

%$(i)$ $\om=\om_\beta$, the polynomial weight defined
%in \eqref{Eq:poly weight-defn} with either $\mu=1$ , $\beta> \frac{d}{l}$
%or $0< \mu < 1$, $\beta> \frac{d+1}{l}$.
%Here $d:=d(G)$ is the degree of the growth of $G$, $\mu$ is the number determined in \eqref{Eq:stric poly growth}, and
%$l> 0$ is such that $\lim_{x\to 0}\frac{\Psi(x)}{x^l}$ exists.

%$(ii)$ $\om=\sg_{\alpha,C}$, the subexponential weight defined in \eqref{Eq:Expo weight-defn} with $0<\alpha <1$,

%$(ii)$ $\om=\rho_{\gamma,C}$, the subexponential weight defined
%in \eqref{Eq:Expo weight II-defn} with $\gamma>0$.
%\end{exm}
\end{cor}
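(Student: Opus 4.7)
The plan is to obtain this corollary as a direct concatenation of the two cited theorems; the only ingredient not explicit in their statements is the unimodularity of $G$, which is needed to apply Theorem \ref{T:twisted Orlicz alg-dual Ban alg}. So I would proceed in three short steps.

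First, I would verify that any compactly generated locally compact group of polynomial growth is unimodular. The modular function $\Delta\from G\to\Rm$ is a continuous group homomorphism. Writing $\lambda$ for the fixed left Haar measure and choosing a compact symmetric generating neighborhood $U$ with $\lambda(U^n)\le Cn^d$ as in \eqref{Eq:stric poly growth}, the relation $\lambda(Ex)=\Delta(x)\lambda(E)$ gives, for any $x\in U^N$,
\[
\Delta(x)^n\,\lambda(U)=\lambda(Ux^n)\le \lambda(U^{1+nN})\le C(1+nN)^d.
\]
Letting $n\to\infty$ forces $\Delta(x)\le 1$, and applying the same argument to $x^{-1}$ forces $\Delta\equiv 1$, so $G$ is unimodular.

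Second, I would invoke Theorem \ref{T:twisted Orlicz alg-poly and exp weight-Poly growth}: in each of the three cases (i), (ii), (iii), that theorem produces non-negative functions $u,v\in\Sm^\Psi(G)$ satisfying the dominating estimate \eqref{Eq:2-cocycle bdd sum} for $|\Om|$.

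Third, with $G$ now known to be unimodular and with $u,v\in\Sm^\Psi(G)$ as above, every hypothesis of Theorem \ref{T:twisted Orlicz alg-dual Ban alg} is in place, and its conclusion is exactly that $(L^\Phi(G),\tw)$ is a dual Banach algebra. There is no substantive obstacle in the proof; the only item whose justification does not already appear verbatim in the two results being combined is the unimodularity check above, which is standard folklore for polynomial growth groups.
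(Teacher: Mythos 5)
Your proof is correct and follows essentially the same route as the paper, which simply cites Theorem \ref{T:twisted Orlicz alg-poly and exp weight-Poly growth} for the existence of $u,v\in\Sm^\Psi(G)$ satisfying \eqref{Eq:2-cocycle bdd sum} and then applies Theorem \ref{T:twisted Orlicz alg-dual Ban alg}. Your explicit verification that compactly generated groups of polynomial growth are unimodular correctly supplies a hypothesis the paper leaves implicit, and that argument is sound.
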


%\begin{proof}

%\end{proof}

\section{Arens regularity}\label{S:Arens regularity}

Let $A$ be a Banach algbera, and let $A^{**}$ be its second dual. There are  standard methods to extend the multiplication on $A$ to $A^{**}$ in two ways.
These multiplications are known as the {\it first} and {\it second Arens products} on $A^{**}$ and are denoted by $\Box$ and $\diamond$, respectively. We say that $A$ is {\it Arens regular} if the first and second Arens products on $A^{**}$ coinsides (See, for example, \cite[Definition 2.6.16 and Eq (2.6.27)]{D}). It is well-known that C$^*$-algebras are Arens regular whereas the group algebra or the measure algebra of an infinite locally compact groups are never Arens regular \cite[Theorem 3.3.28]{D}. It is also shown in \cite{CY} that a weighted group algebra is Arens regular if and only if the underlying group is discrete and countable and the weight satisfies certain decay condition (See also \cite[Theorem 8.11]{DL})

Our goal in this section is to extend the method of \cite[Theorem 8.11]{DL} to
obtain Arens regularity of twisted Orlicz algebras. We show that in contract to the result of \cite{CY}, twisted Orlicz algebras can be Arens regular for non-discrete groups. Since $L^\Phi(G)=\Sm^\Psi(G)^*$, we have $$L^\Phi(G)^{**}=L^\Phi(G)\oplus \Sm^\Psi(G)^\bot,$$ where
$$\Sm^\Psi(G)^\bot=\{\F \in L^\Phi(G)^{**} : F=0 \ \text{on}\ \Sm^\Psi(G) \}.$$
Our method is to use the condition given in Theorem \ref{T:twisted Orlicz alg-dual Ban alg} to break down the twisted convolution into two separate parts
and then show that first and second Arens products vanishes on $\Sm^\Phi(G)^\perp$. This, in particular, implies that $L^\Phi(G)$ is Arens regular.

\begin{lem}\label{L:map Orlicz space to bounded functions}
Let $G$ be a locally compact unimodular group, and $L\in L^\infty(G\times G)$. There are bounded continues operators
\begin{align*}
\xi,\eta:L^\Phi(G)\times L^\Phi(G)^* \to L^\infty(G)
\end{align*}
such that for every $g\in L^\Phi(G)$ and $h\in L^\Psi(G)$, we have
\begin{align}\label{Eq:map Orlicz space to bounded functions-I}
\xi(g,h)(s)=\int_G g(t)h(st)L(s,t) dt \ \ (s\in G),
\end{align}
and
\begin{align}\label{Eq:map Orlicz space to bounded functions-II}
\eta(g,h)(s)=\int_G g(t)h(ts)L(t,s) dt \ \ (s\in G).
\end{align}
Moreover for every $g\in L^\Phi(G)$, both operators $\xi(g,\cdot)$ and $\eta(g,\cdot)$ are $\w-\w$ continuous.
\end{lem}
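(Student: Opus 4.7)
The plan is to realize both $\xi(g,\cdot)$ and $\eta(g,\cdot)$ as adjoints of suitable bounded operators $L^1(G)\to L^\Phi(G)$. Since adjoints automatically take values in the correct dual space, are $w^*$-$w^*$ continuous, and inherit norm bounds from the preadjoint, this single reformulation handles the target space, the continuity statement, and the boundedness estimates simultaneously; only the boundedness of the preadjoint itself requires genuine work. The form of this preadjoint is forced by pairing \eqref{Eq:map Orlicz space to bounded functions-I} against a test function in $L^1(G)$ and applying Fubini.

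Concretely, for $g\in L^\Phi(G)$ and $s\in G$ I set $g_s(r):=L(s,s^{-1}r)\,g(s^{-1}r)$. Because left translation is an isometry on $L^\Phi(G)$ (a consequence of left Haar invariance alone) and $|L|\le\|L\|_\infty$ almost everywhere, one has $g_s\in L^\Phi(G)$ with $\|g_s\|_\Phi\le \|L\|_\infty\|g\|_\Phi$ uniformly in $s$. I then define
\begin{equation*}
S_g\colon L^1(G)\longrightarrow L^\Phi(G),\qquad S_g(\phi)(r):=\int_G \phi(s)\,g_s(r)\,ds.
\end{equation*}
Joint measurability of $(s,r)\mapsto g_s(r)$ and Fubini make $S_g(\phi)$ a measurable function, and Minkowski's integral inequality for the Orlicz norm (a quick consequence of \eqref{Eq:Orlicz norm} via duality and Fubini) yields
\begin{equation*}
\|S_g(\phi)\|_\Phi\le \int_G|\phi(s)|\,\|g_s\|_\Phi\,ds\le \|L\|_\infty\,\|g\|_\Phi\,\|\phi\|_1,
\end{equation*}
so $S_g$ is a bounded linear operator with $\|S_g\|\le \|L\|_\infty\|g\|_\Phi$.

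I now define $\xi(g,H):=S_g^{*}(H)$ for $H\in L^\Phi(G)^*$. This automatically takes values in $L^\infty(G)=L^1(G)^*$, satisfies $\|\xi(g,H)\|_\infty\le\|L\|_\infty\,\|g\|_\Phi\,\|H\|$, is linear and $w^*$-$w^*$ continuous in $H$, and is jointly bounded (hence continuous) as a bilinear map in $(g,H)$. To check that this definition agrees with \eqref{Eq:map Orlicz space to bounded functions-I} for $h\in L^\Psi(G)\subseteq L^\Phi(G)^*$, I test $S_g^{*}(h)$ against an arbitrary $\phi\in L^1(G)$: Fubini combined with the change of variable $r=st$ (measure preserving by unimodularity of $G$) gives
\begin{equation*}
\int_G S_g^{*}(h)(s)\,\phi(s)\,ds=\langle h,S_g(\phi)\rangle=\int_G\phi(s)\int_G h(st)\,g(t)\,L(s,t)\,dt\,ds,
\end{equation*}
and since $\phi$ is arbitrary the two $L^\infty$-representatives coincide almost everywhere.

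The operator $\eta$ is handled by an entirely parallel argument, replacing $g_s$ by $g'_s(r):=L(rs^{-1},s)\,g(rs^{-1})$ and $S_g$ by the analogous $S'_g$. The only structural difference is that the required isometry property is now invariance of $\|\cdot\|_\Phi$ under right translation, and this is precisely the point at which the unimodularity hypothesis on $G$ is essential. The main technical obstacles in executing the plan are the joint measurability of $(s,r)\mapsto g_s(r)$ and a careful invocation of Minkowski's integral inequality in the Orlicz setting; both are routine provided one keeps track of the Luxemburg/Orlicz norm equivalences \eqref{Eq:Orlicz norm-Luxemburg relation}.
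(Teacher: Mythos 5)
Your proposal is correct and is essentially the paper's own argument: your operator $S_g$ is exactly the paper's $\zeta(\cdot,g)$, and your adjoint relation $\langle \phi,\xi(g,H)\rangle=\langle H,S_g(\phi)\rangle$ is the paper's defining identity \eqref{Eq:2}, with the paper merely unwinding the $w^*$-$w^*$ continuity of the adjoint through an explicit net $h_k\to^{w^*}\Hc$ instead of quoting it. Your packaging via $S_g^*$ is a slightly cleaner presentation of the same proof, and you correctly isolate unimodularity as entering only through right-translation invariance in the $\eta$ case.
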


\begin{proof}
Without loss of generality, we can assume that $\|L\|_\infty=1$. For every $g\in L^\Phi(G)$ and $h\in L^\Psi(G)$, define
$$\xi(g,h)(s)=\int_G g(t)h(st)L(s,t) dt$$
 and $$\eta(g,h)(s)=\int_G g(t)h(ts)L(t,s) dt.$$
Clearly, $\xi(g,h)$ and $\eta(g,h)$ are measurable. Moreover
$$|\xi(g,h)|\leq |h|*|\check{g}| \ \ \text{and} \ \ |\eta(g,h)|\leq |h|*|g|,$$
and so, both $\xi(g,h)$ and $\eta(g,h)$ belong to $L^\infty(G\times G)$ with
$$\max\{\|\xi(h,h)\|_\infty,\|\eta(g,h)\|_\infty \}\leq \|h\|_\Psi N_\Phi(g).$$
For every $f\in L^1(G)$, we have
\begin{align}\label{Eq:1}
\int_G f(s)\xi(g,h)(s)ds=\int_G h(t) \zeta(f,g)(t) dt
\end{align}
where
$$\zeta(f,g)(t)=\int_G f(s)g(s^{-1}t)L(s,s^{-1}t) ds.$$
Since
$|\zeta(f,g)|\leq  |f|*|g|$
and $|f|*|g|\in L^\Phi(G)$ (as $L^\Phi(G)$ is a Banach $L^1(G)$-bimodule under convolution), it follows that $\zeta(f,g)\in L^\Phi(G)$.
Thus if we take $\Hc\in L^\Phi(G)^*=\Sm^\Psi(G)^{**}$ and $\{h_k\}\subset \Sm^\Psi(G)$ with $h_k\to^{w^*} \Hc$, then it follows from \eqref{Eq:1} that
$$\lim_{k\to \infty}  \int_G f(s)\xi(g,h_k)(s)ds=\lim_{k\to \infty} \int_G h_k(t) \zeta(f,g)(t) dt=\la \Hc , \zeta(f,g) \ra.$$
Therefore we can define $\xi(g,\Hc)\in L^\infty(G)$ to be the function satisfying \begin{align}\label{Eq:2}
 \la f , \xi(g,\Hc) \ra=\la \Hc , \zeta(f,g) \ra  \ \ (f\in L^1(G),g\in L^\Phi(G)).
 \end{align}
 It is straightforward to verify that $\xi:L^\Phi(G)\times L^\Phi(G)^* \to L^\infty(G)$ is a bounded bilinear map. Also the relation \eqref{Eq:2} implies that for every $g\in L^\Phi(G)$, $\xi(g,\cdot)$ is $\w-\w$ continuous. Similarly, we can prove the desired statement for $\eta$.
\end{proof}

\begin{thm}
Let $G$ be a locally compact unimodular group, and let $\Om \in \Zb$. Suppose that there exit non-negative measurable functions $u,v \in \Sm^\Psi(G)$ satisfying \eqref{Eq:2-cocycle bdd sum}. Then there are bounded bilinear operators
\begin{align}\label{Eq:second dual action Orlicz space-splitting operator}
\xi,\eta:L^\Phi(G)\times L^\Phi(G)^* \to L^\infty(G)
\end{align}
such that for every $f,g\in L^\Phi(G)$ and $\Hc\in L^\Phi(G)^*$,
\begin{align}\label{Eq:second dual action Orlicz space-splitting relation}
\la f\tw g , \Hc \ra =\la f , u\xi(g,\Hc) \ra+\la g , v\eta(f,\Hc) \ra.
\end{align}
Moreover, both operators $\xi(g,\cdot)$ and $\eta(g,\cdot)$ are $\w-\w$ continuous.
\end{thm}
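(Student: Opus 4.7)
The plan is to exploit the previous lemma by choosing two bounded kernels $L_1, L_2 \in L^\infty(G\times G)$ that together decompose $\Om$, and then glue the resulting $\xi$ and $\eta$ together using the splitting hypothesis.

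First I would build the decomposition. Partition $G\times G$ into the two measurable sets $E=\{(s,t):u(s)\geq v(t)\}$ and $E^c$, and set
\begin{align*}
M_1(s,t)=\mathbf{1}_E(s,t)\,\Om(s,t)/u(s),\qquad M_2(s,t)=\mathbf{1}_{E^c}(s,t)\,\Om(s,t)/v(t),
\end{align*}
with the convention that the term is zero where the denominator vanishes. On $E$ the hypothesis \eqref{Eq:2-cocycle bdd sum} gives $|\Om|\leq u+v\leq 2u$, so $|M_1|\leq 2$; similarly $|M_2|\leq 2$ on $E^c$. Thus $M_1,M_2\in L^\infty(G\times G)$ and $\Om(s,t)=u(s)M_1(s,t)+v(t)M_2(s,t)$ almost everywhere. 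Lemma \ref{L:map Orlicz space to bounded functions} applied to $L=M_1$ (via \eqref{Eq:map Orlicz space to bounded functions-I}) and $L=M_2$ (via \eqref{Eq:map Orlicz space to bounded functions-II}) produces the desired bounded bilinear maps $\xi,\eta$, each of which is $w^*$-$w^*$ continuous in the second variable.

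Second, I would verify \eqref{Eq:second dual action Orlicz space-splitting relation} on the dense slice $\Hc=h\in \Sm^\Psi(G)\subset L^\Psi(G)$. Starting from \eqref{Eq:twisted convolution}, substitute $r=s^{-1}t$ in the inner integral (left-invariance of Haar measure) to obtain
\begin{align*}
\la f\tw g,h\ra=\int_G\!\!\int_G f(s)g(r)\Om(s,r)h(sr)\,ds\,dr.
\end{align*}
Fubini is justified by $|\Om|\leq u+v$ combined with H\"older's inequality \eqref{Eq:Holder inequality}: unimodularity of $G$ ensures both $\|L_sh\|_\Psi=\|R_sh\|_\Psi=\|h\|_\Psi$ and $fu,gv\in L^1(G)$, making the double integral absolutely convergent. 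Inserting $\Om=uM_1+vM_2$ and splitting cleanly gives two double integrals that, after isolating $\xi(g,h)(s)=\int g(r)h(sr)M_1(s,r)dr$ and $\eta(f,h)(r)=\int f(s)h(sr)M_2(s,r)ds$, are precisely $\la f,u\xi(g,h)\ra$ and $\la g,v\eta(f,h)\ra$.

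Third, I would promote the identity to arbitrary $\Hc\in L^\Phi(G)^*=\Sm^\Psi(G)^{**}$. By Goldstine's theorem choose a bounded net $\{h_k\}\subset \Sm^\Psi(G)$ with $h_k\to \Hc$ in the $w^*$-topology of $\Sm^\Psi(G)^{**}$. The left-hand side $\la f\tw g,h_k\ra$ converges to $\la f\tw g,\Hc\ra$ since $f\tw g\in L^\Phi(G)$. On the right, the $w^*$-$w^*$ continuity from Lemma \ref{L:map Orlicz space to bounded functions} gives $\xi(g,h_k)\to \xi(g,\Hc)$ and $\eta(f,h_k)\to \eta(f,\Hc)$ weak-$*$ in $L^\infty(G)=L^1(G)^*$; since $fu,gv\in L^1(G)$ (H\"older, using $u,v\in L^\Psi(G)$), pairing against them passes to the limit and yields \eqref{Eq:second dual action Orlicz space-splitting relation}.

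The main obstacle I anticipate is the careful measurability and integrability bookkeeping in the first two steps: constructing $M_1,M_2$ as genuine elements of $L^\infty(G\times G)$ (handling zero sets of $u$ and $v$) and pushing Fubini through in the unimodular setting. Once that is in place, the passage to arbitrary $\Hc$ is essentially a routine $w^*$-limit driven by Lemma \ref{L:map Orlicz space to bounded functions}.
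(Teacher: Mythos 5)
Your proposal is correct and follows essentially the same route as the paper: factor $\Om$ through bounded kernels so that Lemma \ref{L:map Orlicz space to bounded functions} applies, verify the splitting identity \eqref{Eq:second dual action Orlicz space-splitting relation} against $h_k\in\Sm^\Psi(G)$, and pass to general $\Hc$ by the $\w$-$\w$ continuity of $\xi(g,\cdot)$ and $\eta(f,\cdot)$. The only (cosmetic) difference is that the paper uses a single kernel $L=\Om/(u+v)$ with $\|L\|_\infty\le 1$ and writes $\Om=L\cdot(u+v)$, whereas you use two kernels $M_1,M_2$ with $\Om=uM_1+vM_2$; both are legitimate, and your version is slightly more explicit about the sets where the denominators vanish.
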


\begin{proof}
By our hypothesis and \eqref{Eq:2-cocycle bdd sum}, we can take an element $L\in L^\infty(G\times G)$ with $\|L\|_\infty\leq 1$ such that
 $$\Om(s,t)=L(s,t)\Big(u(s)+v(t)\Big) \ \ \ (s,t\in G).$$
 Suppose that $\xi$ and $\eta$ are the mapping defined in Lemma \ref{L:map Orlicz space to bounded functions} associated to $L$. It particular, $\xi$ and $\eta$ satisfy \eqref{Eq:map Orlicz space to bounded functions-I} and
\eqref{Eq:map Orlicz space to bounded functions-II}, respectively, and the operators $\xi(g,\cdot)$ and $\eta(g,\cdot)$ are w$^*$-w$^*$ continuous. Take $\Hc\in L^\Phi(G)^*=\Sm^\Psi(G)^{**}$ and $\{h_k\}\subset \Sm^\Psi(G)$ with $h_k\to^{w^*} \Hc$. For every $f,g\in L^\Phi(G)$, we have
\begin{eqnarray*}
\la f \tw g , \Hc \ra &=& \lim_{k\to \infty}  \la f \tw g , h_k  \ra \\ &=&
 \lim_{k\to \infty}  \int_G\int_G f(s) g(t)h_k(st)\Om(s,t) dsdt \\
% &=& \lim_{k\to \infty} [\int_G\int_G f(s)u(s) g(t)h_k(st)L(s,t) %dsdt+\int_G\int_G f(s) g(t)v(t)h_k(st)L(s,t) dsdt] \\
 &=& \lim_{k\to \infty} [\int_G f(s)u(s)\xi(g,h_k)(s)ds+\int_G g(t)v(t)\eta(f,h_k)(t) dt]\\
 &=& \int_G f(s)u(s)\xi(g,\Hc)(s)ds+\int_G g(t)v(t)\eta(f,\Hc)(t) dt \\
 &=& \la f , u\xi(g,\Hc) \ra+\la g , v\eta(f,\Hc) \ra.
\end{eqnarray*}
In the above, we have used the facts that $fu,gv\in L^1(G)$, both $\xi(g,\cdot)$ and $\eta(f,\cdot)$ are $\w-\w$ continuous and $\Sm^\Psi(G)$ is a Banach $L^\infty(G)$-module under pointwise product.
\end{proof}

We are now ready to obtain our desired result. We recall the formula \cite[Eq (2.6.28)]{D} which gives a useful formulation of the first and second Arens products. For every $\F,\G\in A^{**}$ and the nets $\{f_i\}$, $\{g_j\}$ in $A$
for which $f_i\to \F$ and $g_j\to \G$ in the $w^*$-topology $\sg(A^{**},A^*)$, we have
\begin{align}
\F \Box \G =w^*-\lim_{i\to \infty} w^*-\lim_{j\to \infty} f_i g_j \ \ \text{,}
\ \ \F \diamond \G =w^*-\lim_{j\to \infty} w^*-\lim_{i\to \infty} f_i g_j.
\end{align}

\begin{thm}\label{T:twisted Orlicz alg-Arens regular}
Let $G$ be a locally compact unimodular group, and let $\Om \in \Zb$. Suppose that there exit non-negative measurable functions $u,v \in \Sm^\Psi(G)$ satisfying \eqref{Eq:2-cocycle bdd sum}. Then
$$ \F \Box \G=\F \diamond \G=0 \ \ (\F,\G \in \Sm^\Psi(G)^\perp).$$
In particular, $(L^\Phi(G),\tw)$ is Arens regular.
\end{thm}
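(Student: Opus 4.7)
My plan is to exploit the splitting identity from the preceding theorem together with the iterated-limit description of the Arens products recalled just above. Fix $\F, \G \in \Sm^\Psi(G)^\perp$ and an arbitrary $\Hc \in L^\Phi(G)^*$, and choose nets $\{f_i\}, \{g_j\}$ in $L^\Phi(G)$ with $f_i \to \F$ and $g_j \to \G$ in the $\w$-topology. Since
\[
\la f_i \tw g_j, \Hc \ra = \la f_i, u\xi(g_j, \Hc) \ra + \la g_j, v\eta(f_i, \Hc) \ra,
\]
the task reduces to pushing the iterated limits in the two prescribed orders through this decomposition and showing that each resulting piece vanishes.

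For $\F \Box \G$ I take the inner limit over $j$ first. In the second summand, $v\eta(f_i, \Hc)$ lies in $\Sm^\Psi(G)$ since $v \in \Sm^\Psi(G)$, $\eta(f_i, \Hc) \in L^\infty(G)$, and $\Sm^\Psi(G)$ is closed under domination (just as in the proof of Theorem \ref{T:twisted Orlicz alg-dual Ban alg}), so the inner limit gives $\la \G, v\eta(f_i, \Hc) \ra = 0$. For the first summand, rewritten as $\la f_i u, \xi(g_j, \Hc) \ra$ in the $L^1$-$L^\infty$ duality, I will prove that $\xi(g_j, \Hc) \to \xi(\G, \Hc)$ in $\sigma(L^\infty(G), L^1(G))$, where $\xi(\G, \Hc) \in L^\infty(G)$ is defined by $\la f, \xi(\G, \Hc) \ra := \la \G, \tilde\eta(f, \Hc) \ra$ for $f \in L^1(G)$. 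Here $\tilde\eta(f, \Hc) := \eta(f,\cdot)^{**}(\Hc)$ is the image of $\Hc$ under the bidual of the bounded operator $\eta(f, \cdot)\colon \Sm^\Psi(G) \to \Sm^\Psi(G)$, so it lies in $\Sm^\Psi(G)^{**} = L^\Phi(G)^*$; the underlying identity $\la f, \xi(g, h)\ra = \la g, \eta(f, h)\ra$ for $g \in L^\Phi(G)$, $h \in \Sm^\Psi(G)$, $f \in L^1(G)$ is a Fubini calculation using unimodularity of $G$ and extends $\w$-continuously to $\Hc$. The inner limit is then $\la f_i, u\xi(\G, \Hc) \ra$, and because $u\xi(\G, \Hc) \in \Sm^\Psi(G)$ by the same domination principle, the outer limit over $i$ yields $\la \F, u\xi(\G, \Hc) \ra = 0$. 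The case $\F \diamond \G$ is entirely symmetric: with the order of limits reversed, the first summand now vanishes because $f_i u \to 0$ weakly in $L^1(G)$ (for $\phi \in L^\infty(G)$ one has $u\phi \in \Sm^\Psi(G)$, whence $\la f_i, u\phi\ra \to \la \F, u\phi\ra = 0$), and the second summand is handled by the mirror-image argument.

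To deduce Arens regularity, decompose arbitrary $\F, \G \in L^\Phi(G)^{**}$ as $\F = f + \F_0$, $\G = g + \G_0$ with $f, g \in L^\Phi(G)$ and $\F_0, \G_0 \in \Sm^\Psi(G)^\perp$; bilinearity splits $\F \Box \G - \F \diamond \G$ into four pieces, the $L^\Phi(G) \times L^\Phi(G)$ piece vanishes trivially, each of the two mixed pieces vanishes because in the iterated limits one of the nets is constant and the two orders then coincide, and the remaining $\F_0 \Box \G_0 - \F_0 \diamond \G_0$ piece is zero by what has just been proved. The step I expect to be the main obstacle is the rigorous construction of $\xi(\G, \Hc) \in L^\infty(G)$ and the verification that $u\xi(\G, \Hc) \in \Sm^\Psi(G)$; the construction reduces to showing that $f \mapsto \la \G, \tilde\eta(f, \Hc)\ra$ is a bounded linear functional on $L^1(G)$ (with norm controlled by $\|\G\|\,\|\Hc\|$ via the operator-norm bound on $\eta(f, \cdot)$), after which the duality $L^\infty(G) = L^1(G)^*$ delivers $\xi(\G, \Hc)$.
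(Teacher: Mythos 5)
Your argument is correct and follows the same overall route as the paper: split $\la f_i\tw g_j,\Hc\ra$ via \eqref{Eq:second dual action Orlicz space-splitting relation}, kill the $v\eta$-term immediately because $v\eta(f_i,\Hc)\in \Sm^\Psi(G)$ and $\G\in\Sm^\Psi(G)^\perp$, pass to a limit object $\xi(\G,\Hc)\in L^\infty(G)$, and kill the remaining term because $u\xi(\G,\Hc)\in\Sm^\Psi(G)$ and $\F\in\Sm^\Psi(G)^\perp$. The one genuine divergence is how $\xi(\G,\Hc)$ is produced. The paper shows that $h\mapsto\lim_{j}\la h,\xi(g_j,\Hc)\ra$ is a bounded functional on the $\|\cdot\|_1$-closure of $L^\Phi(G)u$ (using that the net $\{g_j\}$ can be taken bounded) and extends it to $L^\infty(G)$ by Hahn--Banach; you instead define it canonically as a transpose, through the Fubini identity $\la f,\xi(g,h)\ra=\la g,\eta(f,h)\ra$ and the bidual of the operator $\eta(f,\cdot)\colon \Sm^\Psi(G)\to\Sm^\Psi(G)$. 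Your construction buys a little: it needs no boundedness of the approximating net, it yields the $\w$-convergence $\xi(g_j,\Hc)\to\xi(\G,\Hc)$ directly rather than only along the specific elements $fu$, and it identifies the extension explicitly instead of through an arbitrary Hahn--Banach choice; the paper's version is shorter and avoids introducing the auxiliary operator $\tilde\eta$. You also spell out the standard four-term decomposition deducing Arens regularity from the vanishing of both products on $\Sm^\Psi(G)^\perp$, which the paper leaves implicit. Both proofs are sound.
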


\begin{proof}
Take $\F,\G \in \Sm^\Psi(G)^\perp$. We will show that $\F\Box \G=0$. The proof of $\F\diamond \G=0$ will be similar. Let $\{g_j\}$ be a net in
$L^\Phi(G)$ such that $w^*-\lim_{j\to \infty} g_j=\G$. Suppose that $\xi$ and $\eta$ are the operators \eqref{Eq:second dual action Orlicz space-splitting operator} satisfying \eqref{Eq:second dual action Orlicz space-splitting relation}. For every $f\in L^\Phi(G)$, we have
\begin{eqnarray*}
\la f \Box \G , \Hc \ra &=& \lim_{j\to \infty} \la f \tw g_j , \Hc \ra \\
 &=& \lim_{j\to \infty} [\la fu , \xi(g_j,\Hc) \ra+\la g_j , v\eta(f,\Hc) \ra]  \\
 &=& \lim_{j\to \infty} \la fu , \xi(g_j,\Hc) \ra+\la \G , v\eta(f,\Hc) \ra \\
 &=& \lim_{j\to \infty} \la fu , \xi(g_j,\Hc) \ra,
 \end{eqnarray*}
since $v\eta(f,\Hc)\in \Sm^\Psi(G)$ and $\G\in \Sm^\Psi(G)^\perp$.
Now let $I$ be the $\|\cdot\|_1$-closure of $L^\Phi(G)u$ in $L^1(G)$. The preceding computation, together with the fact that $\{g_j\}$ can be chosen to be uniformly bounded in $L^\Phi(G)$, shows that the mapping
$$ I \ni h\mapsto \lim_{j\to \infty} \la h , \xi(g_j,\Hc) \ra$$
is well-defined bounded linear functional on $I$. Hence, by the Hahn-Banach theorem, it has an extension to $L^1(G)^*=L^\infty(G)$; we denote it by $\xi(\G,\Hc)$. In particular,
$$ \lim_{j\to \infty} \la fu , \xi(g_j,\Hc) \ra=\la fu , \xi(\G,\Hc) \ra \ \ \ \ (f\in L^\Phi(G)).$$
Therefore, if we take a net $\{f_i\}\subset L^\Phi(G)$ such that $w^*-\lim_{i\to \infty} f_i=\F$, then
\begin{eqnarray*}
\la \F \Box \G , \Hc \ra &=& \lim_{i\to \infty} \la f_i \Box \G , \Hc \ra \\
 &=& \lim_{i\to \infty} \lim_{j\to \infty} \la f_iu , \xi(g_j,\Hc) \ra  \\
 &=& \lim_{i\to \infty} \la f_iu , \xi(\G,\Hc) \ra \\
 &=& \lim_{i\to \infty} \la f_i , u\xi(\G),\Hc) \ra\\
 &=& \la \F , u\xi(\G,\Hc) \ra \\
 &=& 0,
\end{eqnarray*}
since $u\xi(\G,\Hc)\in \Sm^\Psi(G)$ and $\F\in \Sm^\Psi(G)^\perp$. This completes the proof.
\end{proof}

The following can be compared with Corollary \ref{C:twisted Orlicz alg-poly and exp weight-dual Ban alg}. Its proof follows from the preceding theorem and Theorem \ref{T:twisted Orlicz alg-poly and exp weight-Poly growth}.

\begin{cor}\label{C:twisted Orlicz alg-poly and exp weight-Arens regular}
%\begin{exm}\label{E:PG group-poly and exp weight-Dual alg}
Let $G$ be a compactly generated group of polynomial growth, let $\Om\in \Zbw$,
and let $\om$ be the weight associated to $\Om$. Then $(L^\Phi(G),\tw)$ is Arens regular if $\om$ is either of the following weights:\\
$(i)$ $\om=\om_\beta$, the polynomial weight \eqref{Eq:poly weight-defn} with $1/\om \in \Sm^\Psi(G)$;\\
$(ii)$ $\om=\sg_{\alpha,C}$, the subexponential weight \eqref{Eq:Expo weight-defn};\\
$(iii)$ $\om=\rho_{\gamma,C}$, the subexponential weight \eqref{Eq:Expo weight II-defn}.
\end{cor}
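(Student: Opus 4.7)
The plan is to realize this corollary as a direct assembly of Theorem \ref{T:twisted Orlicz alg-poly and exp weight-Poly growth} and Theorem \ref{T:twisted Orlicz alg-Arens regular}; no new technical input is needed. Concretely, for each of the three weights (i)--(iii), I would invoke Theorem \ref{T:twisted Orlicz alg-poly and exp weight-Poly growth} to extract non-negative measurable functions $u, v \in \Sm^\Psi(G)$ such that
\[
|\Om(s,t)|\leq u(s)+v(t) \qquad (s,t\in G),
\]
which is exactly condition \eqref{Eq:2-cocycle bdd sum} required in the hypothesis of Theorem \ref{T:twisted Orlicz alg-Arens regular}.

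Before applying that theorem, I would also need to verify that $G$ is unimodular, since Theorem \ref{T:twisted Orlicz alg-Arens regular} is stated for locally compact unimodular groups. This is automatic for compactly generated groups of polynomial growth: the modular function $\Delta \from G \to \Rm$ is a continuous group homomorphism, so $\Delta(G)$ is a compactly generated subgroup of $\Rm$ inheriting polynomial growth from $G$. Since any nontrivial compactly generated subgroup of $(\Rm,\cdot)$ is exponentially growing, we must have $\Delta \equiv 1$.

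With unimodularity in hand, the decomposition bound from Theorem \ref{T:twisted Orlicz alg-poly and exp weight-Poly growth}, and the observation that $\Zbw \subseteq \Zb$ so that $\Om \in \Zb$, Theorem \ref{T:twisted Orlicz alg-Arens regular} immediately delivers Arens regularity of $(L^\Phi(G),\tw)$. I do not foresee any obstacle: all the analytic work has been absorbed into the two theorems being cited, and this corollary merely packages them for the three concrete families of weights on compactly generated groups of polynomial growth, in parallel with Corollary \ref{C:twisted Orlicz alg-poly and exp weight-dual Ban alg}.
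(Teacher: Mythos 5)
Your overall route is exactly the paper's: the proof given there is a one-line assembly of Theorem \ref{T:twisted Orlicz alg-poly and exp weight-Poly growth} (which supplies $u,v\in\Sm^\Psi(G)$ satisfying \eqref{Eq:2-cocycle bdd sum} for each of the three weights) with Theorem \ref{T:twisted Orlicz alg-Arens regular}. You are also right to observe that the unimodularity hypothesis of Theorem \ref{T:twisted Orlicz alg-Arens regular} must be checked, a point the paper leaves implicit.

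However, the argument you give for unimodularity is flawed. It is not true that every nontrivial compactly generated subgroup of $(\Rm,\cdot)$ has exponential growth: the subgroup generated by a single element $a>1$ is infinite cyclic, hence of linear growth, so knowing that the image $\Delta(G)$ of the modular function has polynomial growth does not force $\Delta(G)=\{1\}$. The standard argument works with the Haar measure directly rather than with the growth of the image group. Fix a compact symmetric generating neighbourhood $U$ with $\lambda(U^n)\leq Cn^d$. Given $x\in G$, choose $m$ with $x\in U^m$; then $Ux^n\subseteq U^{1+mn}$ and $\lambda(Ux^n)=\Delta(x)^n\lambda(U)$, so $\Delta(x)^n\lambda(U)\leq C(1+mn)^d$ for all $n$, which forces $\Delta(x)\leq 1$; applying the same bound to $x^{-1}$ gives $\Delta(x)=1$. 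With unimodularity repaired in this way, the rest of your assembly is correct and coincides with the paper's proof.
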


We finish this section by pointing out that if $A$ is an Arens regular Banach algebra, then $A^{**}$ is a dual Banach algebra \cite[Example 4.4.2]{Run}. Hence under the assumption of Theorem \ref{T:twisted Orlicz alg-Arens regular}, $L^\Phi(G)^{**}$ is a dual Banach algebra.

\section{Cohomological properties}

In this section, we consider two important cohomological properties, namely amenability and Connes amenability, that are associated to Banach algebras and dual Banach algebras, respectively. We will show that neither of them can hold on most twisted Orlicz algebras.

\subsection{Amenability}

A Banach algebra $A$ is amenable if every bounded derivation from $A$ into the dual of any Banach $A$-bimodule is inner \cite[Definition 2.1.9]{Run}. By the celebrated theorem of Johnson, the group algebra of a locally compact group is amenable if and only if the underlying group is amenable \cite[Theorem 2.1.8]{Run}. In contrast, the weighted group algebra of an amenable group is amenable if and only if the weight is diagonally bounded. For a symmetric weight, the later condition implies the weight must be trivial.

We now look at the amenability property for a twisted Orlicz algebra and show that in most cases, its amenability implies that it must be finite dimensional. We start with the following result which is an immediate consequence of Theorem \ref{T:Orlicz space-bai-non discrete} and the fact that a necessary condition for a Banach algebra to be amenable is to have a bounded approximate identity
\cite[Proposition 2.2.1]{Run}.

\begin{thm}\label{T:Orlicz space-amen-non discrete}
Let $G$ be a non-discrete locally compact group, and let $\Om\in \Zbw$.
If $(L^\Phi(G),\tw)$ is a twisted Orlicz algebra, then neither $(L^\Phi(G),\tw)$ nor $(S^\Phi(G),\tw)$ are amenable.
\end{thm}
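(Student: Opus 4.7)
The plan is to obtain this as an almost immediate consequence of Theorem \ref{T:Orlicz space-bai-non discrete}(iii), together with the well-known necessary condition for amenability recorded in \cite[Proposition 2.2.1]{Run}: if a Banach algebra $A$ is amenable, then $A$ admits a bounded approximate identity. This is a classical fact, typically proved via the existence of an approximate diagonal, or by applying amenability to the trivial derivation into a suitable dual module.

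The argument I would carry out is one line: assume, toward a contradiction, that either $(L^\Phi(G),\tw)$ or $(\Sm^\Phi(G),\tw)$ is amenable. By \cite[Proposition 2.2.1]{Run}, the algebra in question then has a bounded approximate identity. Since $G$ is non-discrete and $(L^\Phi(G),\tw)$ is a twisted Orlicz algebra by hypothesis, Theorem \ref{T:Orlicz space-bai-non discrete}(iii) asserts that neither algebra can have a bounded approximate identity, which is the desired contradiction.

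There is essentially no obstacle here, since all the substantive work was done in Section~4 in establishing Theorem \ref{T:orlicz space-bai} and its packaged corollary Theorem \ref{T:Orlicz space-bai-non discrete}. The only small point worth flagging is that we apply \cite[Proposition 2.2.1]{Run} separately to the two Banach algebras $(L^\Phi(G),\tw)$ and $(\Sm^\Phi(G),\tw)$; the fact that $\Sm^\Phi(G)$ is itself a Banach algebra under $\tw$ (not merely a submodule) was recorded in Theorem \ref{T:twisted Orlicz alg}(ii), so the implication \emph{amenable $\Rightarrow$ bounded approximate identity} legitimately applies to each.
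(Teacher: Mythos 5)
Your proposal is correct and is exactly the paper's argument: the authors present this theorem as an immediate consequence of Theorem \ref{T:Orlicz space-bai-non discrete}(iii) together with the fact that an amenable Banach algebra must have a bounded approximate identity \cite[Proposition 2.2.1]{Run}. Nothing further is needed.
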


When $G$ is discrete, one can not apply the criterion \cite[Proposition 2.2.1]{Run} as the twisted orlicz algebra is unital. To compensate for this, we use the concept of the augmentation ideal. However, this method only works when twisted actions comes from a 2-coboundary determined by a weight.
i.e. when we consider the {\it weighted Orlicz algebras}. Let us first recall some terminology (see also \cite[Lemma 3.3]{OS1}).

%We do not know whether our result can be extended to the general case.

Let $G$ be a locally compact group, and let $\om$ be a weight on $G$. We define
the {\it weighted $L^\Phi$-space}
\begin{align}
L^\Phi_\om(G):=\{ f:G \to \C : f\om \in L^\Phi(G)\}.
\end{align}
and the {\it weighted $\Sm^\Phi$-space}
\begin{align}
\Sm^\Phi_\om(G):=\{ f:G \to \C : f\om \in \Sm^\Phi(G)\}.
\end{align}
Then both $L^\Phi_\om(G)$ and $\Sm^\Phi_\om(G)$ with the norm $\|f\|_{\Phi,\om}=\|f\om\|_\Phi$ are Banach spaces.
Moreover, $(L^\Phi_\om(G),\|\cdot\|_{\Phi,\om})$ becomes a Banach $(L^1_\om(G), \|\cdot\|_{1,\om})$-bimodule under the standard convolution having $\Sm^\Phi_\om(G)$ as an essential $L^1_\om(G)$-submodule.
It is straightforward to verify that the mapping
\begin{align}\label{Eq:twist covn-weight conv-mapping}
\Lambda_\om: L^\Phi(G) \to L^\Phi_\om(G) \ , \ \Lambda_\om(f)=f/\om
\end{align}
is a linear isometric isomorphism satisfying $(f\in L^\Phi(G), g\in L^1(G))$
\begin{align}\label{Eq:twist covn-weight conv-relation}
\Lambda_\om(f\tw g)=\Lambda_\om(f)* \Lambda_\om(g) \ \ \ \text{and} \ \ \Lambda_\om(g\tw f)=\Lambda_\om(g)* \Lambda_\om(f),
\end{align}
where $\tw$ is the twisted convolution coming from the 2-coboundary
$$\Om(s,t)=\frac{\om(st)}{\om(s)\om(t)} \ \ \ (s,t\in G).$$
Moreover, similar relations to \eqref{Eq:twist covn-weight conv-mapping} and \eqref{Eq:twist covn-weight conv-relation} holds if we replace $L^\Phi(G)$ and $L^\Phi_\om(G)$ with $\Sm^\Phi(G)$ and $\Sm^\Phi_\om(G)$, respectively.
If, in addition, $(L^\Phi(G),\tw,\|\cdot\|_\Phi)$ is a Banach algebra, then \eqref{Eq:twist covn-weight conv-relation} holds for every
$f,g\in L^\Phi(G)$ so that  $(L^\Phi_\om(G),*, \|\cdot\|_{\Phi,\om})$ is a Banach algebra having $\Sm^\Phi_\om(G)$ as a closed subalgebra. Hence we have two alternative in viewing the algebraic structure on Orlicz spaces: either stay in the same space and use the twisted convolution or move to the weighted space but keep the product unchanged! For most part of this paper, it was more convenient for us to use the former approach. However, in few cases such as the one below, it is better to use the latter formulation.

%$1 \in \Sm_{\om^{-1}}^\Psi(G)$

\begin{thm}\label{T:Orlicz space-amen-untwisted}
Let $G$ be a discrete group, and let $\om$ be a weight on $G$. Suppose that  $(l^\Phi_\om(G),\|\cdot\|_{\Phi,\om})$ is a Banach algebra and $1/\om \in \Sm^\Psi(G)$. Then the following are equivalent:\\
$(i)$ $l^\Phi_\om(G)$ is amenable;\\
$(ii)$ $\Sm^\Phi_\om(G)$ is amenable;\\
$(iii)$ $G$ is finite.
\end{thm}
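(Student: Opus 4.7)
The implication (iii)$\Rightarrow$(i), (ii) is immediate: when $G$ is finite, $l^\Phi_\omega(G) = \Sm^\Phi_\omega(G)$ is finite-dimensional, so as a Banach algebra it coincides, up to equivalence of norms, with the ordinary group algebra $l^1(G) \cong \C G$. The latter is amenable by Johnson's theorem since finite groups are amenable, and amenability is preserved under topological isomorphism of Banach algebras.

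For the hard direction (i), (ii)$\Rightarrow$(iii), the plan is to exploit the augmentation character. Define $\phi\from l^\Phi_\omega(G) \to \C$ by $\phi(f) = \sum_{s\in G} f(s)$. Applying H\"older's inequality \eqref{Eq:Holder inequality} to $f\omega \in l^\Phi(G)$ and $1/\omega \in l^\Psi(G)$, we get
\[
|\phi(f)| \leq \|f\|_1 \leq N_\Psi(1/\omega)\,\|f\omega\|_\Phi = N_\Psi(1/\omega)\,\|f\|_{\Phi,\omega},
\]
so $\phi$ is a continuous nonzero multiplicative linear functional on $l^\Phi_\omega(G)$; its restriction to $\Sm^\Phi_\omega(G)$ is again a continuous character, nonzero because $C_c(G) \subseteq \Sm^\Phi_\omega(G)$. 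Since $A/\ker\phi \cong \C$ is amenable, amenability of $A = l^\Phi_\omega(G)$ (respectively $\Sm^\Phi_\omega(G)$) yields, by a standard amenability argument, a bounded approximate identity $\{u_\alpha\}$ for the augmentation ideal $\ker\phi$, with $\|u_\alpha\|_{\Phi,\omega}\leq M$.

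The core step is to deduce from this BAI that $\omega$ must be bounded on $G$. For every $g\in G$ the element $\delta_g - \delta_e$ lies in $\ker\phi$, so
\[
R_{g^{-1}} u_\alpha - u_\alpha \;=\; u_\alpha * (\delta_g - \delta_e) \;\longrightarrow\; \delta_g - \delta_e
\]
in $\|\cdot\|_{\Phi,\omega}$. Pointwise evaluation at $s=g$ gives $u_\alpha(e) - u_\alpha(g) \to 1$; using the continuous embedding $l^\Phi(G)\hookrightarrow l^\infty(G)$ available in the discrete setting, one has the uniform pointwise bound $|u_\alpha(s)|\,\omega(s)\leq cM$. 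Mimicking the classical proof that amenable weighted group algebras $(l^1_\omega(G),*)$ force the weight to be diagonally bounded, one derives $\sup_g \omega(g)\omega(g^{-1})<\infty$; since the definition of a weight already guarantees $1/\omega\in L^\infty(G)$, i.e.\ $\omega$ bounded below, this diagonal boundedness then forces $\omega$ to be bounded above on $G$.

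Finally, once $\omega\leq K$ on $G$, we have $1/\omega\geq 1/K>0$, so the hypothesis $1/\omega\in \Sm^\Psi(G)\subseteq l^\Psi(G)$ implies that the constant function $1$ lies in $l^\Psi(G)$. For a discrete group this requires $\sum_{s\in G} \Psi(\alpha)<\infty$ for some $\alpha>0$, and since $\Psi$ is strictly positive on $(0,\infty)$, this is possible only when $G$ is finite. The main obstacle in this plan is the middle step, namely adapting to the Orlicz setting the classical derivation of diagonal boundedness from a BAI in the augmentation ideal, since the Orlicz norm is less amenable to direct pointwise manipulation than the $l^1$-norm; this will require careful use of the Orlicz--Luxemburg equivalence \eqref{Eq:Orlicz norm-Luxemburg relation} together with the discrete embedding $l^\Phi(G)\hookrightarrow l^\infty(G)$.
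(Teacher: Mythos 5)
Your reduction to a bounded approximate identity $\{u_\alpha\}$ in the augmentation ideal is exactly the paper's first move, and your closing observation (a weight bounded above together with $1/\om\in \Sm^\Psi(G)$ forces the constant function $\mathbf{1}$ into $l^\Psi(G)$, hence $G$ finite) is sound. The genuine gap is the middle step, which you yourself flag as the ``main obstacle'': you never derive diagonal boundedness (or boundedness) of $\om$ from the BAI, and the estimates you do write down cannot yield it. From $u_\alpha(e)-u_\alpha(g)\to 1$ and $|u_\alpha(s)|\om(s)\le cM$ one only gets, after passing to a subnet with $u_\alpha(e)\to c_0$, that $u_\alpha(g)\to c_0-1$ for every $g\ne e$; if $\om$ is unbounded this forces $c_0=1$, i.e.\ $u_\alpha\to\delta_e$ pointwise, which is perfectly consistent and produces neither a contradiction nor any bound on $\om$. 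Moreover, the classical argument you propose to mimic extracts diagonal boundedness from a bounded approximate \emph{diagonal} in $l^1_\om(G)\widehat{\otimes}l^1_\om(G)$, not from a BAI in $\ker\phi$, so the adaptation is not a routine matter of replacing $l^1$-norms by Orlicz norms; as it stands, this step would fail.

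The missing idea, which also makes the detour through boundedness of $\om$ unnecessary, is to exploit the dual-space structure: via $f\mapsto f\om$, the algebra $l^\Phi_\om(G)$ is isometric to $l^\Phi(G)=\Sm^\Psi(G)^*$. Let $f$ be a $w^*$-cluster point of $\{u_\alpha\}$. The hypothesis $1/\om\in\Sm^\Psi(G)$ is used precisely here: it makes the augmentation character $w^*$-continuous, since $\la f,1\ra=\la f\om,1/\om\ra$, so $f$ stays in the augmentation ideal. Separate $w^*$-continuity of multiplication by $\delta_g-\delta_e$ then upgrades your pointwise limit to the exact identity $(\delta_g-\delta_e)*f=\delta_g-\delta_e$, i.e.\ $f(e)-f(g)=1$ for all $g\ne e$. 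A function in $l^1(G)$ that is constant on $G\setminus\{e\}$ must vanish there when $G$ is infinite, forcing $f=\delta_e$ and hence $\la f,1\ra=1\ne 0$, contradicting membership in the augmentation ideal. This closes the argument with no information about the size of $\om$ beyond the standing assumption $1/\om\in L^\infty(G)$.
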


\begin{proof}
It is clear that if $G$ is finite, then $l^\Phi_\om(G)=\Sm^\Phi_\om(G)\cong l^1(G)$ so that they are amenable. We will now prove $(i)\Longrightarrow(iii)$. The proof of $(ii)\Longrightarrow(iii)$ is similar. Since $1/\om \in \Sm^\Psi(G)$, we have $l^\Phi_\om(G)\subseteq l^1(G)$. Now consider the augmentation ideal of $l^1(G)$, i.e.
$$I=\{f\in l^1(G): \sum_{s\in G} f(s)=0 \}$$
and put
$$I_\Phi:=I\cap l^\Phi_\om(G).$$
It is routine to verify that $I_\Phi$ is a closed two-sided ideal of codimension 1 in $l_\om^\Phi(G)$. Moreover, since $l^\Phi_\om(G)$ is amenable, by \cite[Theorem 2.3.7]{Run}, $I_\Phi$ has a bounded approximate identity $\{f_i\}$. Suppose that $f\in l^\Phi_\om(G)=\Sm_{\om^{-1}}^\Phi(G)^*$ is a cluster point of $\{f_i\}$.
Since $1/\om \in \Sm^\Psi(G)$, it follows that $f\in I_\Phi$ as
$$ \la f , 1 \ra=\la f\om , 1/\om \ra=\lim_{j\to \infty} \la f_{i_j}\om , 1/\om \ra=\lim_{j\to \infty} \la f_{i_j} , 1 \ra=0.$$
On the other hand, we have
$$ (\delta_s-\delta_e)*f=w^*-\lim_{i\to \infty} (\delta_s-\delta_e)*f_i=\delta_s-\delta_e \ \ \ (s\in G).$$
Thus, in particular, $f\in I_\Phi$ is nonzero and satisfies
$$f(e)-f(s)=1 \ \ \ (s\in G, s\neq e).$$
However, this is only possible if $G$ is finite.
\end{proof}

\subsection{Connes amenability}

Suppose that $A=\mathcal{B}^*$ is a dual Banach algebra. A dual Banach $A$-bimodule $X$
is {\it normal} if for every $x\in X$, the module multiplications
$$a \mapsto ax \ \ \text{and}\ \ a\mapsto xa $$
from $A$ into $X$ are $w^*$-$w^*$ continuous. We say that $A$ is {\it Connes-amenable} if every $w^*$-$w^*$ continuous derivation from $A$ into any normal dual Banach $A$-bimodule is inner \cite[Definition 4.4.7]{Run}.
The concept of Connes-amenability has proven to be an appropriate notion of amenability considered for dual Banach algebras. For example, it is well-known that a von Neumann algebra is amenable if and only if it is subhomogeneous, a rather restrictive condition \cite[Theorem 6.1.7]{Run}. However, it is Connes-amenable if and only if it is injective (this is a deep result in operators algebra, see \cite[Chapter 6]{Run}). M. Daws has also defined a notion of injectivity for unital dual Banach algebras and has shown that the corresponding results also holds in this category \cite[Theorem 6.13]{Daws}. It is also shown by Runde that the measure algebra of a locally compact group is Connes-amenable if and only if the underlying group is amenable thus providing an analogous of Jonshon's theorem for group algebras
\cite[Theorem 4.4.13]{Run}.

In Section \ref{S:Dual Banach algebras}, we have provided several classes of twisted Orlicz algebras which are dual Banach algebra. We can thus look at whether they are Connes-amenable. As we will see below, this rarely happen. This is similar to what we obtain in the preceding section with regard to the amenability. For the case of discrete groups, again our method only works for the untwisted cases.

\begin{thm}\label{T:Orlicz space-conn amen}
Let $G$ be a locally compact unimodular group, let $\Om\in \Zbw$, and let $\om$ be the weight associated to $|\Om|$. Suppose that there exit non-negative measurable functions $u,v \in \Sm^\Psi(G)$ satisfying \eqref{Eq:2-cocycle bdd sum}. Then:\\
$(i)$ If $G$ is non-discrete, then $(L^\Phi(G),\tw)$ is not Connes amenable;\\
$(ii)$ If $G$ is discrete and $1/\om \in \Sm^\Psi(G)$, then $(l^\Phi_\om(G),*)$ is Connes amenable if and only if $G$ is finite.
\end{thm}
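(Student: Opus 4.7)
The plan for part $(i)$ is to exploit a standard structural fact: every Connes amenable dual Banach algebra is automatically unital (see \cite[Proposition 4.4.11]{Run} or the corresponding result in Runde's paper on amenability for dual Banach algebras). Combined with Theorem \ref{T:Orlicz space-bai-non discrete}$(iv)$, which says that $(L^\Phi(G),\tw)$ is unital if and only if $G$ is discrete, the non-discreteness hypothesis in $(i)$ forces $(L^\Phi(G),\tw)$ to be non-unital, and hence not Connes amenable. Note that Section \ref{S:Dual Banach algebras} already ensures $(L^\Phi(G),\tw)$ is a dual Banach algebra under our hypotheses, so that speaking of Connes amenability is meaningful.

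For part $(ii)$, the implication ``$G$ finite $\Rightarrow$ Connes amenable'' is immediate, since $l^\Phi_\om(G)$ is then a finite-dimensional Banach algebra isomorphic to $l^1(G)$, which is amenable and therefore Connes amenable. For the converse, the plan is to mimic the augmentation-ideal argument used in the proof of Theorem \ref{T:Orlicz space-amen-untwisted}. The first step is to verify that the augmentation character
\[
\chi \from l^\Phi_\om(G) \To \C, \qquad \chi(f) = \sum_{s\in G} f(s),
\]
is $\w$-continuous: under the pairing $\la f , h \ra = \la f\om , h \ra$ between $l^\Phi_\om(G)$ and its predual $\Sm^\Psi(G)$, we have $\chi(f) = \la f\om , 1/\om \ra$, and the hypothesis $1/\om \in \Sm^\Psi(G)$ gives the desired $\w$-continuity. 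Hence $I_\Phi := \ker \chi$ is a $\w$-closed, codimension-one two-sided ideal. The next (and key) step is to use Connes amenability of $l^\Phi_\om(G)$ to produce a bounded approximate identity $\{f_i\} \subset I_\Phi$ for $I_\Phi$. Granting this, the remainder of the argument is exactly the one in Theorem \ref{T:Orlicz space-amen-untwisted}: choose a $\w$-cluster point $f \in l^\Phi_\om(G)$ of $\{f_i\}$, use the $\w$-continuity of $\chi$ together with $1/\om \in \Sm^\Psi(G)$ to conclude $f \in I_\Phi$, and then deduce from the bounded approximate identity property and the separate $\w$-continuity of multiplication that $(\delta_s - \delta_e) * f = \delta_s - \delta_e$ for every $s \in G$. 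Reading off coordinates gives $f(e) - f(s) = 1$ for all $s \neq e$, which is incompatible with $f \in l^\Phi_\om(G)$ unless $G$ is finite.

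The main obstacle is producing the bounded approximate identity for $I_\Phi$ from Connes amenability alone, since one cannot invoke Helemskii-Sheinberg directly as in the amenable case. The intended route is Runde's characterization of Connes amenability by the existence of a \emph{normal virtual diagonal} $M$ for $l^\Phi_\om(G)$: because $\chi$ is $\w$-continuous, the decomposition $l^\Phi_\om(G) = I_\Phi \oplus \C\delta_e$ is a $\w$-continuous splitting, and the component of $M$ supported in $I_\Phi \, \widehat{\otimes}\, I_\Phi$ then yields a normal virtual diagonal for $I_\Phi$, from which a bounded approximate identity for $I_\Phi$ can be extracted by standard $\w$-approximation. Once this ingredient is in place, the argument proceeds entirely in parallel with Theorem \ref{T:Orlicz space-amen-untwisted}.
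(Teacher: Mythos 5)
Your part $(i)$ is exactly the paper's argument: Connes amenable dual Banach algebras are unital, and Theorem \ref{T:Orlicz space-bai-non discrete}$(iv)$ rules out a unit for non-discrete $G$. In part $(ii)$, your setup also matches the paper — the $\w$-continuity of the augmentation character via $1/\om\in\Sm^\Psi(G)$, the $\w$-closed codimension-one ideal $I_\Phi$, and the final combinatorial contradiction $f(e)-f(s)=1$ — but the step you yourself flag as the main obstacle is a genuine gap. You propose to start from a \emph{normal virtual diagonal} for $l^\Phi_\om(G)$, but the existence of a normal virtual diagonal is not a characterization of Connes amenability for general dual Banach algebras: a normal virtual diagonal implies Connes amenability, while the converse is established only for special classes (von Neumann algebras, measure algebras) and is not available here. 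So you cannot legitimately produce the diagonal $M$ you want to decompose. Even granting $M$, it does not live in $l^\Phi_\om(G)\,\widehat{\otimes}\,l^\Phi_\om(G)$ but in the dual of a space of suitably $w^*$-continuous functionals on the projective tensor product, so ``the component of $M$ supported in $I_\Phi\,\widehat{\otimes}\,I_\Phi$'' is not defined without substantial extra work, and the claimed extraction of a bounded approximate identity for $I_\Phi$ from it is not justified.

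The gap is avoidable, and the paper's route is both shorter and uses only the fact you already invoked in part $(i)$. Since $I_\Phi$ is a $\w$-closed two-sided ideal of codimension one whose unitization is identified with $l^\Phi_\om(G)$ as a dual Banach algebra, Connes amenability passes from $l^\Phi_\om(G)$ down to $I_\Phi$ by \cite[Proposition 6.1]{Daws}. But a Connes amenable dual Banach algebra is unital, so $I_\Phi$ has an actual identity $f$; then $(\delta_s-\delta_e)*f=\delta_s-\delta_e$ holds exactly for every $s\in G$ — no bounded approximate identity, cluster point, or separate $\w$-continuity of multiplication is needed — and reading off coordinates gives $f(e)-f(s)=1$ for all $s\neq e$, which forces $G$ to be finite since $f\in l^\Phi_\om(G)\subseteq l^1(G)$. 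I recommend you replace the virtual-diagonal construction with this transfer-to-the-ideal argument.
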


\begin{proof}
The statement in (i) follows from Theorem \ref{T:Orlicz space-bai-non discrete}(iv) and the fact that Connes-amenability implies that the algebra should be unital. For (ii), let $I_\Phi$ be the augmentation ideal in $l^\Phi(G)$ defined in the proof of Theorem \ref{T:Orlicz space-amen-untwisted} which is shown to be a $w^*$-closed. Hence its unitization can be identified with $l^\Phi(G)$
as dual Banach algebra, and so, by our assumption and \cite[Proposition 6.1]{Daws}, $I$ is Connes-amenable. In particular, it is unital so that, as it is shown in the proof of Theorem \ref{T:Orlicz space-amen-non discrete}, $G$ must be finite.
\end{proof}

\end{document}